\newtheorem{definition}{Definition}[section]
\newtheorem{theorem}{Theorem}[section]
\newtheorem{prop}{Proposition}[section]
\newtheorem{corollary}{Corollary}[section]
\def\rr{\mathbb{R}}
\def\ss{\mathbb{S}}
\def\hh{\mathbb{H}}
\def\bb{\mathbb{B}}
\def\tr{\mathrm{tr}}
\def\p{\partial}
\def\a{\alpha}
\def\th{\theta}
\def\p{\partial}
\def\S{{\Sigma}}
\def\<{\langle}
\def\>{\rangle}
\def\div{{\rm div}}
\def\n{\nabla}
\def\De{\Delta}
\def\vp{\varphi}
\def\R{{\mathbb R}}
\def\ep{\epsilon}
\numberwithin{equation} {section}
\begin{document}
	
	\title[Stable capillary hypersurfaces supported on a horosphere]{Stable capillary hypersurfaces supported on a horosphere in the hyperbolic space}
	\author{Jinyu Guo}
	\address{School of Mathematical Sciences\\
		Xiamen University\\
		361005, Xiamen, P.R. China}
	\email{jinyu.guo@math.uni-freiburg.de
	}
	\author{Guofang Wang}
	\address{Universit\"at Freiburg,
		Mathematisches Institut,
		Eckerstr. 1,
		79104 Freiburg, Germany}
	\email{guofang.wang@math.uni-freiburg.de}
	\author{Chao Xia}
	\address{School of Mathematical Sciences\\
		Xiamen University\\
		361005, Xiamen, P.R. China}
	\email{chaoxia@xmu.edu.cn}
	
	\thanks{CX is  supported by the  NSFC (Grant No. 11871406). The paper was carried out while JG was visiting the Mathematical Institute, the University of Freiburg under the support by the China Scholarship Council.}
	
	\begin{abstract}
		In this paper, we study a stability problem of free boundary hypersurfaces, and also
		capillary ones whose boundary supported on a  horosphere in hyperbolic space.
		We prove  that  umbilical hypersurfaces are only stable immersed capillary hypersurfaces whose boundary supported on a horosphere.
		Using the same method, we show that a totally geodesic hyperplane is only stable immersed type-II hypersurface whose boundary supported on a horosphere.
	\end{abstract}
	
	\keywords{Capillary surfaces, free boundary CMC hypersurface,  stability, horosphere}
	
	\maketitle
	
	\medskip
	
	
	\section{Introduction}
	The stability of minimal or constant mean curvature (CMC) hypersurfaces plays an important role in differential geometry of hypersurfaces. A closed CMC hypersurface is called (weakly) stable if the second variation of the area functional is nonnegative among any volume-preserving variations. A classical rigidity result proved by Barbosa-do Carmo \cite{BdC} and Barbosa-do Carmo-Eschenburg \cite{BCE} says that:
	{\it any stable immersed closed CMC hypersurfaces in a space form are geodesic spheres.}
	
	The free boundary CMC (or minimal) hypersurfaces in a domain $B$ attract recently many attentions. Here a free boundary hypersurface
	means that the hypersurface intersects its support $\partial B$ orthogonally.
	When $B$ is a $(n+1)$-dimensional unit ball $\bb^{n+1}$, there are
	many interesting results about free boundary minimal and CMC hypersufaces.
	For the  free boundary minimal hypersufaces there has been a lot of interesting work. Here we just mention the recent work of Fraser-Schoen \cite{FS1, FS2, FS3} and  refer  to  the book \cite{Hildebrandt}
	for classical results.
	In this paper we are mainly interested in
	CMC hypersufaces. There are
	several classical  rigidity results for CMC free boundary hypersurfaces, for example the Hopf type theorem by Nitsche \cite{Ni} and Ros-Souam \cite{RS} and the Alexandrov type theorem by Ros-Souam \cite{RS}.

	The simplest examples of free boundary  CMC or minimal hypersurfaces  in $\bb^{n+1}$   are the spherical caps and the geodesic disk in $\bb^{n+1}$ intersecting with $\ss^{n}$ orthogonally,
	which are in fact the unique minimizers of the area functional among all embedded hypersurfaces with a fixed enclosed volume. They are the solutions to the relative isoperimetric  problem in $\bb^{n+1}$, which was solved
	first by Burago-Mazya  \cite{BM}  and later also by Bokowsky-Sperner \cite{BS} and Almgren \cite{A} independently. A minimizer is certainly
	stable, in the sense of the nonnegativity of the second variation of the area functional under volume constraint.
	When one considers the class of immersed hypersurfaces, as in this paper,
	the concept of the stability is more suitable than the one of  the minimum, since the enclosed volume is not well defined for immersed hypersurfaces.
	The study of the  classification of stable CMC free boundary and
	capillary hypersufaces has been initiated by Ros and Vergasta \cite{RV} and Ros and Souam \cite{RS} 20 years ago.
	It has been conjectured that the free boundary totally geodesic $n$-balls and the free boundary spherical caps are only stable free boundary CMC hypersurfaces in $\bar \bb^{n+1}$. This conjecture has been recently solved by Nunes \cite{Nu} in two dimensions (see also Barbosa \cite{Ba}) and by Wang-Xia \cite{WX} in any dimensions. Moreover, Wang-Xia \cite{WX} gave complete classifications for any stable capillary hypersurfaces in a geodesic ball of any space forms. Recall that a capillary hypersurface in $\bb^{n+1}$ is a CMC hypersurface whose boundary intersects $\ss^n$ at a constant contact angle. A capillary hypersurface is called stable if the second variation of the energy functional of this hypersurface is non-negative among any volume-preserving variations.

	When $B$ is an Euclidean half-space $\rr^{n+1}_{+}$, its boundary $\p B=\R^n$ is a totally geodesic hyperplane.
	It is clear that in this case if we consider the stability problem for free boundary hypersurfaces, it reduces to the case of closed hypersurfaces mentioned above through a simply reflection. However, if one considers capillary hypersurfaces with a contact angle $\theta \not = \pi/2$, the stability problem becomes non-trivial. Very recently,
	Ainouz-Souam \cite{AS} characterized that the spherical caps are only stable immersed capillary hypersurface in $\rr^{n+1}_{+}$  in the case the contact angle $\theta<\pi/2$, under a condition that the boundary is embedded. 
	For the contact angle $\theta>\pi/2$, Choe and Koiso \cite{CK} showed the same result, under a stronger condition that the boundary of hypersurface is convex.
	See also the previous work of Marinov \cite{M2} for $n=2$.
	

	In this paper we are interested in the stability problem of
	capillary hypersurfaces  supported on a horosphere in hyperbolic space
	$\hh^{n+1}$ (of sectional curvature $-1$). A horosphere is a complete non-compact hypersurface with all principal curvatures equal to $1$.

	Our main result in this paper is the following theorem.
	\begin{theorem}\label{thm0.2}
		A compact, immersed  capillary hypersurface with boundary supported on a horosphere in $\hh^{n+1}$ is stable if and only if it is umbilical.
	\end{theorem}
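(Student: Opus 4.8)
The plan is to follow the Barbosa--do Carmo--Eschenburg scheme, as adapted to the capillary boundary setting by Wang--Xia, working in the hyperboloid model $\hh^{n+1}\subset\rr^{n+1,1}$. First I would record the second variation formula. For a volume-preserving admissible variation of the capillary hypersurface $\Sigma$ with normal speed $f$, stability is the statement that
\[
Q(f,f)=\int_\Sigma\left(|\nabla f|^2-(|A|^2-n)f^2\right)dA-\int_{\partial\Sigma} q\,f^2\,ds\ge 0
\]
for every $f\in C^\infty(\Sigma)$ with $\int_\Sigma f\,dA=0$. Here $\overline{\mathrm{Ric}}(N,N)=-n$ in $\hh^{n+1}$ accounts for the $-n$, and the boundary density $q$ is determined by the contact angle $\theta$ and the second fundamental forms of $\Sigma$ and of the support along $\partial\Sigma$. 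The crucial simplification is that the support is a horosphere, all of whose principal curvatures equal $1$, so that its second fundamental form equals its induced metric; this makes $q$ completely explicit in terms of $\theta$ and the normal curvature of $\Sigma$ along $\partial\Sigma$.

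Next I would introduce the geometric test functions. For a constant vector $a\in\rr^{n+1,1}$ let $V_a=\langle x,a\rangle|_\Sigma$ be the restriction of the ambient linear function and $u_a=\langle a,N\rangle$ the associated support function. Since the hyperboloid has constant curvature, $V_a$ is concircular, $\overline\nabla^2 V_a=V_a\,\bar g$, and a direct computation gives (up to sign conventions) the Minkowski-type identity $\Delta_\Sigma V_a=nV_a-Hu_a$ together with, for constant $H$, the Jacobi-type identity $Lu_a=-HV_a-nu_a$, where $L=\Delta+|A|^2-n$ is the Jacobi operator. The horosphere is a level set $\{V_{a_0}=\mathrm{const}\}$ of exactly one such function, attached to the null vector $a_0$ defining it; this is what makes $V_{a_0}$ and $u_{a_0}$ compatible with the boundary.

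The heart of the argument is to assemble from the $V_a$ and $u_a$ a single admissible test function $f$ and to evaluate $Q(f,f)$. Using $Q(f,f)=-\int_\Sigma fLf+\int_{\partial\Sigma}(f\,\partial_\nu f-qf^2)\,ds$ together with the two identities above, the interior part collapses to explicit combinations of $V_a$, $u_a$ and $|A|^2$; summing over a pseudo-orthonormal basis $\{a_\alpha\}$ and invoking the completeness relations $\sum_\alpha\epsilon_\alpha u_{a_\alpha}^2=1$, $\sum_\alpha\epsilon_\alpha V_{a_\alpha}u_{a_\alpha}=0$, $\sum_\alpha\epsilon_\alpha V_{a_\alpha}^2=-1$ converts these into $\int_\Sigma|A|^2\,dA$, areas, and mean-curvature integrals, which I would combine with the pointwise inequality $|A|^2\ge H^2/n$ (equality exactly at umbilic points). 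The goal is to reach an inequality of the form $c\int_\Sigma(|A|^2-H^2/n)\,dA\le 0$ with $c>0$, forcing the traceless second fundamental form to vanish identically and hence $\Sigma$ to be umbilical; the converse is the easy direction, since for an umbilical hypersurface $|A|^2=H^2/n$ turns all of these estimates into equalities.

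Two points will be the main obstacles. The indefinite signature of $\rr^{n+1,1}$ means the completeness relation carries signs $\epsilon_\alpha$, so one cannot simply add the stability inequalities over all $\alpha$; the construction must be arranged around the null vector $a_0$ and the spacelike directions so that only correctly-signed combinations enter, and the mean-zero constraint $\int_\Sigma f\,dA=0$ must be enforced (via the Minkowski identity) without destroying this sign structure. The second, and I expect the more delicate, obstacle is the boundary term: one must show that the contributions $\int_{\partial\Sigma}(f\,\partial_\nu f-qf^2)\,ds$ produced by the chosen $f$ have the right sign, and this is precisely where the horospherical geometry (principal curvatures equal to $1$, so that the support function of the horosphere is affine in the $V_a$) and the constancy of the contact angle must be exploited to make the boundary integral nonpositive.
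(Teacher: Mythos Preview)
Your outline follows the Wang--Xia ball strategy too closely, and the two obstacles you flag are not minor technicalities to be cleaned up later---they are exactly where the horosphere argument diverges, and your proposal does not contain the ideas needed to overcome them.

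First, the summation over a pseudo-orthonormal basis of $\rr^{n+1,1}$ is not salvageable in the way you suggest. The stability inequality is one-sided, so you cannot subtract; the indefinite signature forces some $\epsilon_\alpha=-1$, and there is no rearrangement that turns the collection of inequalities $Q(f_\alpha,f_\alpha)\ge 0$ into a usable single inequality via the completeness relations. The paper in fact abandons the ``sum over a basis'' paradigm and works with a \emph{single} test function, built from the one conformal Killing field $X_{n+1}=x-E_{n+1}$ (in the upper half-space model) that is tangent to the horosphere, together with $V_{n+1}=1/x_{n+1}$. A Minkowski-type identity for this pair yields $\int_M\varphi_{n+1}\,dA=0$ for
\[
\varphi_{n+1}=nV_{n+1}-H\,\bar g(X_{n+1},\nu)-n\cos\theta\,\bar g(x,\nu),
\]
and one computes $J\varphi_{n+1}=(n|h|^2-H^2)V_{n+1}$ together with the exact boundary relation $\nabla_\mu\varphi_{n+1}=q\,\varphi_{n+1}$.

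Second, and more seriously, the boundary contribution does not merely ``have the right sign'': with the correct auxiliary construction it \emph{vanishes identically}, and engineering this cancellation is the heart of the proof. After inserting $\varphi_{n+1}$ into the stability inequality, one uses that the position vector $x$ is Killing (so $J\bar g(x,\nu)=0$) together with Green's formula to push an unwanted interior term to $\partial M$. One then introduces the auxiliary function $\Phi=-HV_{n+1}-n\,\bar g(E_{n+1},\nu)$, for which $\Delta\Phi=(n|h|^2-H^2)\,\bar g(E_{n+1},\nu)$ and $\Phi|_{\partial M}=-(H+n\cos\theta)$ is constant, and adds the identity $\int_M\Phi\,\Delta\Phi+|\nabla\Phi|^2=\int_{\partial M}\Phi\,\nabla_\mu\Phi$. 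The combined boundary integrand then reduces, via the relation $h(\mu,\mu)=H-\sin\theta\,\hat H+(n-1)\cos\theta$ and the classical Minkowski formula $\int_{\partial M}(\hat H\,\bar g(x,\bar\nu)-(n-1))\,ds=0$ on the flat horosphere, to zero. What survives is
\[
\int_M n\,\bar g(E_{n+1}^T,E_{n+1}^T)\,(n|h|^2-H^2)+|\nabla\Phi|^2\,dA\le 0,
\]
forcing umbilicality. Your proposal contains none of these mechanisms; in particular, without the auxiliary $\Phi$ the interior integrand after inserting $\varphi_{n+1}$ is not pointwise nonnegative, so hoping for a sign on the boundary would not be enough even if it held.
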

We remark that an umbilical hypersurface  in $\hh^{n+1}$ is a piece of a total geodesic sphere,  an equidistant hypersurface, a horosphere or a geodesic sphere.

As a special case, we have the classification for stable free boundary constant mean curvature hypersurfaces.
	\begin{corollary}\label{cor0.1}
		A compact, immersed  free boundary CMC hypersurface with boundary supported on a horosphere in $\hh^{n+1}$ is stable if and only if it is umbilical.
	\end{corollary}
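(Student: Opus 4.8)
The plan is to adapt the conformal-vector-field method of Wang--Xia \cite{WX} to a horospherical support, the new features being that the support is non-compact and intrinsically flat and is cut out by a null direction in the Lorentzian model. First I would record the second variation. For an admissible variation (volume preserving, keeping $\partial\Sigma$ on the horosphere at the fixed contact angle $\theta$) with normal speed $f$ satisfying $\int_\Sigma f\,dA=0$, the stability form reads
\[
Q(f,f)=\int_\Sigma\bigl(|\nabla f|^2-(|A|^2-n)f^2\bigr)\,dA-\int_{\partial\Sigma} q\,f^2\,ds,
\]
where I used $\overline{\mathrm{Ric}}(\nu,\nu)=-n$ in $\hh^{n+1}$ and $q=q(\theta)$ is built from the contact angle together with the second fundamental forms of $\Sigma$ and of the horosphere along $\partial\Sigma$. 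The decisive simplification is that the horosphere is totally umbilical with all principal curvatures equal to $1$, so its second fundamental form is the induced metric; this should reduce $q$ to an explicit expression in $\cot\theta$ and the geodesic curvature of $\partial\Sigma$ in $\Sigma$. Stability means $Q(f,f)\ge 0$ for all such $f$.

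Next I would manufacture test functions from the conformal Killing fields of the hyperboloid model $\hh^{n+1}\subset\rr^{n+1}$ (with the Lorentzian ambient inner product). For a vector $b$ put $X_b=b+\langle x,b\rangle x$, so that $\bar\nabla_Y X_b=\phi_b\,Y$ with $\phi_b=\langle x,b\rangle$, and let the horosphere be $\{\phi_a=-1\}$ for a fixed null vector $a$. Writing $u_b=\langle b,\nu\rangle$ and restricting $\phi_b$ to $\Sigma$, the Gauss and Codazzi equations give, for a CMC hypersurface, the two identities
\[
\Delta_\Sigma\phi_b=n\phi_b+nH\,u_b,\qquad \Delta_\Sigma u_b+|A|^2u_b=-nH\,\phi_b,
\]
the second of which says $L u_b:=\Delta_\Sigma u_b+(|A|^2-n)u_b=-n(H\phi_b+u_b)$. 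Thus the $u_b$ are controlled by the Jacobi operator, and I would feed the combinations $f_b=u_b+c\,\phi_b$ into $Q$, integrating by parts via $Q(f,f)=-\int_\Sigma fLf+\int_{\partial\Sigma}f(\partial_\mu f-qf)\,ds$.

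The engine is then the Lorentzian trace identities obtained by summing over a pseudo-orthonormal basis $\{b_\alpha\}$ with signs $\epsilon_\alpha$: from $\sum_\alpha\epsilon_\alpha b_\alpha\otimes b_\alpha=\mathrm{id}$ one gets $\sum_\alpha\epsilon_\alpha u_{b_\alpha}^2=1$, $\sum_\alpha\epsilon_\alpha\phi_{b_\alpha}^2=-1$, $\sum_\alpha\epsilon_\alpha u_{b_\alpha}\phi_{b_\alpha}=0$, together with $\sum_\alpha\epsilon_\alpha|\nabla u_{b_\alpha}|^2=|A|^2$, $\sum_\alpha\epsilon_\alpha|\nabla\phi_{b_\alpha}|^2=n$ and $\sum_\alpha\epsilon_\alpha\langle\nabla u_{b_\alpha},\nabla\phi_{b_\alpha}\rangle=-nH$. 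Summing the bulk integrand of $Q(f_{b_\alpha},f_{b_\alpha})$ collapses everything to the polynomial $c^2|A|^2-2cnH+n$ in the constant $c$, whose relation to $|\mathring A|^2=|A|^2-nH^2$ is what detects umbilicity; choosing the basis adapted to the null vector $a$ aligns the single negative sign with the direction transverse to the horosphere and lets the parabolic, horosphere-tangent fields carry the genuinely positive information. After combining with the boundary integrals — which, thanks to the horosphere being umbilical with $II=g$ and to the angle condition, should assemble into a term of definite sign — stability would force $\int_\Sigma|\mathring A|^2\,w\,dA\le 0$ with a positive weight $w$, hence $\mathring A\equiv 0$, i.e.\ $\Sigma$ is umbilical. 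The converse, that every umbilical hypersurface is stable, I would check directly by exhibiting these combinations as Jacobi fields realizing equality.

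The main obstacle is the interplay of three constraints on the test functions: they must (i) satisfy the volume constraint $\int_\Sigma f=0$, (ii) satisfy the Robin/capillary boundary condition so as to be admissible, and (iii) survive the indefinite signature, in which the sign-weighted sum of the stability inequalities is not automatically nonnegative. I expect all three to be resolved simultaneously by adapting the basis to the null defining vector $a$ — using the parabolic translations tangent to the horosphere (all spacelike, hence favorably signed) and absorbing the lone timelike/null direction through the volume constraint — and by using the umbilicity $II^{\text{horosphere}}=g$ to sign the boundary term. Getting the constant $c$ and the boundary contribution to cooperate, so that the residual bulk term is a nonnegative multiple of $|\mathring A|^2$, is the crux of the argument.
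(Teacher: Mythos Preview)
Your plan follows the natural template from \cite{WX} for the ball, but it runs into a difficulty that the paper explicitly flags as the reason that template fails here. The issue is your step (iii): you intend to sum $\sum_\alpha \epsilon_\alpha\,Q(f_{b_\alpha},f_{b_\alpha})$ over a Lorentzian pseudo-orthonormal basis, but stability only gives $Q(f,f)\ge 0$ for each admissible $f$, so the signed sum carries no sign information. You say you will ``absorb the lone timelike/null direction through the volume constraint'' and use the parabolic translations for the positive part, but this is not a mechanism---the volume constraint kills at most one scalar degree of freedom, and there is no reason the remaining spacelike directions alone produce an inequality with the right sign against $|\mathring A|^2$. The paper in fact remarks (see the footnote after \eqref{intro-Mink16}) that the other conformal Killing fields $X_\alpha$, $\alpha=1,\dots,n$, do yield Minkowski-type identities but that the authors were unable to use them; your basis-summation scheme is essentially the approach they abandoned.

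What the paper does instead is quite different in structure: it uses a \emph{single} test function $\varphi_{n+1}=nV_{n+1}-H\,\bar g(X_{n+1},\nu)-n\cos\theta\,\bar g(x,\nu)$ built from the one distinguished conformal Killing field $X_{n+1}=x-E_{n+1}$ (the half-space model is used), shown admissible via the new Minkowski formula \eqref{Mink1}. Plugging $\varphi_{n+1}$ into the stability inequality leaves a ``bad'' term $\int_M \bar g(x,\nu)\,J\varphi_{n+1}$; since $x$ is Killing one has $J\bar g(x,\nu)=0$, so Green's formula pushes this entirely to $\partial M$. The decisive trick is then to add the vanishing quantity $\int_M\Delta\tfrac12\Phi^2-\int_{\partial M}\Phi\,\nabla_\mu\Phi$ for the auxiliary function $\Phi=-HV_{n+1}-n\,\bar g(E_{n+1},\nu)$: the bulk part upgrades the interior integrand to the nonnegative $n\,\bar g(E_{n+1}^T,E_{n+1}^T)(n|h|^2-H^2)+|\nabla\Phi|^2$, while the boundary part, after invoking the classical Minkowski formula on the flat horosphere and the identity $h(\mu,\mu)=H-\sin\theta\,\hat H+(n-1)\cos\theta$, \emph{exactly cancels} the boundary integral coming from the bad term (see \eqref{xeq2134}). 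None of this cancellation machinery---the single test function, the specific $\Phi$, or the boundary Minkowski identity on $\mathcal H$---appears in your outline, and it is precisely what replaces the basis-summation you propose.
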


	We emphasize that in Thereom \ref{thm0.2} and hence in Corollary \ref{cor0.1} there is no requirement that the boundary is embedded. We believe that our method can be used to improve  the results of Ainouz-Souam \cite{AS}  and Choe and Koiso \cite{CK}  by removing the embeddedness of the boundary.

	Let us discuss the main difference between this work and our previous work in \cite{WX}. In order to compare, let us review the main steps used in \cite{WX}.
	The crucial ingredient in proving the classification result in \cite{WX} is a family of new Minkowski type formulas which involve no boundary term. For instance,  for an immersion smooth hypersurface $x: M\to \bar \bb^{n+1}$ with free boundary in $\rr^{n+1}$, it holds
	\begin{equation}\label{eq1111}
	n\int_M V_a dA  =\int _M H \<X_a, \nu\> d A,
	\end{equation}
	where $a\in \R^{n+1}$ is a constant vector field, $V_a$ and  $X_a$  are defined by
	\begin{equation*}
	V_a:=\<x,a\>, \qquad
	X_a:= \<x, a\>x-\frac 12 (1+|x|^2)a.
	\end{equation*}
	The key features of $X_a$ are its conformal Killing property and being tangential to the support $\ss^n$. By \eqref{eq1111} the function $nV_a-H\<X_a, \nu\>$, $a\in \R^{n+1}$ can be used as an admissible test function for the stability. Moreover this function has nice properties, which imply that the stability leads to a geometric condition
	\begin{equation}\label{1.2}
	\int_M (n |x|^2 -\frac 12  (|x|^2-1) H \langle x,\nu\rangle) (n|h|^2-H^2)dA \le 0.
	\end{equation}
	 If the integrand is non-negative, it is easy to see that the hypersurface must be umbilical.
	One can not directly to show the non-negativity. Instead we  used
	another
	auxiliary function $$\Phi =\frac 12 (|x|^2-1)H - n\<x, \nu\>$$
	and added \eqref{1.2} with $\int_M \Phi \Delta \Phi dA$, which is in fact zero. Then the new integrand is non-negative and the umbilic is easy to show.
	Such a technique of using Minkowski formulas in handling stability problems is effective, and goes back at least to the work of \cite{BdC} and \cite{BCE}. For a specific problem, the choice of  suitable admissible test functions and the	auxiliary function $\Phi$ is the key.
	  We refer to a recent survey paper \cite{WXsurvey} for details.
	


	We follow closely this approach in this paper, with  crucial modifications. We indicate the modifications  in the case of free boundary hypersurfaces.
	We shall use the half space model for $\hh^{n+1}$
	and consider  the horosphere
	\begin{equation*}
	\mathcal{H}=\{x\in\mathbb{R}^{n+1}_{+}:x_{n+1}=1\} .
	\end{equation*}
We first establish a Minkowski type formula for free boundary hypersurface supported on $\mathcal H$,
	\begin{eqnarray}\label{intro-Mink16}
	\int_M nV_{n+1}-\bar{g}(X_{n+1},\nu)H\,dA=0,
	\end{eqnarray}
	where $V_{n+1}$ and $X_{n+1}$ are defined by
	\begin{equation*}
	V_{n+1}:=\frac{1}{x_{n+1}}, \qquad
	X_{n+1}:=x-E_{n+1}.
	\end{equation*}
	Here $X_{n+1}$ is a conformal Killing vector field in $\hh^{n+1}$ parallel to $\mathcal{H}$.
	Different from the previous case, where $(n+1)$ conformal Killing fields $X_a$, $a=E_i, i=1,\cdots, n+1$ are used,  we here only use \textbf{one} conformal vector field $X_{n+1}$.\footnote{We remark that
		there are {other}  conformal Killing vector fields
		\begin{equation*}
		X_{\alpha}=x_{n+1}E_{\alpha}-x_{\alpha}E_{n+1}+(x_{\alpha}x-\frac{1}{2}|x|^{2}E_{\alpha}), \quad \alpha=1,\cdots,n,
		\end{equation*}
		from which one can also obtain Minkowski type formulas. However we are unable to use those formulas.}
	Similarly, by \eqref{intro-Mink16}
	we know \begin{eqnarray}\label{x-varphi}
	\varphi_{n+1}=nV_{n+1}-\bar{g}(X_{n+1},\nu)H
	\end{eqnarray}
	is an admissible test function  in the stability inequality, by which we  derive an integral inequality
	\begin{eqnarray}\label{xeq1}
	&&\int_{M}\left(nV_{n+1}^2+HV_{n+1}\bar{g}(E_{n+1},\nu)\right)
	(n|h|^2-H^2)
	dA-H\int_{M}\bar{g}(x,\nu)(n|h|^2-H^2)V_{n+1} dA\leq 0.
	\end{eqnarray}
As a next step we want to add some vanishing integral term $\int_M \Delta \frac12\Phi^2 (=0)$ to handle the first term in the RHS of \eqref{xeq1}. However, the second term in the RHS of \eqref{xeq1} makes trouble.
	The key observation is that the term is equal to $\int_{M}\bar{g}(x,\nu)J\varphi_{n+1}dA$, which can be transformed to a boundary integral, by utilizing the Killing property of $x$. Now
	by using a special choice $\Phi$ (see \eqref{auxi}), with $0=\int_{M}\Delta \frac12\Phi^2-\int_{\p M}\Phi \n_\mu\Phi$,
	we find $\int_{M}\Delta \frac12\Phi^2$ controls the first term in the RHS of \eqref{xeq1}, while  $\int_{\p M}\Phi \n_\mu\Phi$ cancels the boundary integral transformed from the second term in the RHS of \eqref{xeq1}.
	Moreover, the new resulted integrand has a sign and the classification follows.
	In the proof we should be careful of the
	roles that the position vector field $x$ and the constant vector field
	play. They are quite different to the roles in \cite{WX}.
	 Especially here the position vector $x$ plays a very crucial role.
	We refer to Section \ref{sec4} for the detailed proof and for the general case of capillary hypersurfaces.
	
	The  Minkowski formula \eqref{intro-Mink16}, and also its generalization \eqref{Mink1} below, has its own interest. In particular, using the idea of Guan-Li \cite{GL, GLW},  from \eqref{intro-Mink16} one can introduce a suitable flow to deform
	free boundary hypersurfaces supported on a horosphere as in \cite{SWX, WX, WWe}. With this method  we hope to establish Alexandrov-Fenchel inequalities for these hypersurfaces as in \cite{SWX} in a forthcoming paper.

	Our method also applies to type-II stable capillary hypersurfaces with boundary supported on a horosphere in $\hh^{n+1}$. A   capillary hypersurface is called type-II stable if the second variation of the area functional at a capillary hypersurface is nonnegative among all wetting-area-preserving variations (see Section \ref{sec5}). This concept is related to the type-II partitioning problem which has been considered by Burago-Maz'ya in late 60s \cite{BM} and Benkowski-Sperner \cite{BS}. Recently, Guo-Xia \cite{GX1}
	gave the complete classification for type-II stable capillary hypersurfaces in a geodesic ball in a space form.
	We observe that  $\varphi_{n+1}$ in \eqref{test-function} below also satisfies $\int_{\p M}\varphi_{n+1} ds=0$, which could be used as a test function for the type-II stability problem. By using the same proof 
	we get the following classification.
	\begin{theorem}\label{thm0.3}
		An immersed minimal hypersurface $M$ with capillary boundary supported on a horosphere in $\hh^{n+1}$ is type-II stable if and only if $M$ is totally geodesic.
	\end{theorem}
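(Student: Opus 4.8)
The plan is to follow the argument for the capillary case of Theorem~\ref{thm0.2} almost verbatim, exploiting the minimality $H=0$ to force decisive simplifications. The ``if'' direction is easy: for a totally geodesic hyperplane $h=0$, and since the ambient Ricci curvature satisfies $\overline{\mathrm{Ric}}(\nu,\nu)=-n$ in $\hh^{n+1}$, the interior part of the second variation of area becomes $\int_M(|\nabla\varphi|^2+n\varphi^2)\,dA\ge 0$, which together with the capillary boundary term remains nonnegative; hence such a hyperplane is type-II stable. The content lies in the converse.

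For the converse, I would take as test function the $\varphi_{n+1}$ of \eqref{test-function}. By the observation recorded just before the statement, it satisfies the wetting-area-preserving constraint $\int_{\p M}\varphi_{n+1}\,ds=0$, and is therefore admissible in the type-II stability inequality $Q(\varphi_{n+1},\varphi_{n+1})\ge 0$. Substituting $\varphi_{n+1}$ into the type-II second variation form and integrating by parts, the interior computation runs parallel to the one that produced \eqref{xeq1} in the type-I case. The crucial point is that with $H=0$ every term carrying a factor of $H$ drops out; in particular the term $-H\int_M\bar{g}(x,\nu)(n|h|^2-H^2)V_{n+1}\,dA$, which in the CMC case was precisely the obstruction that forced the auxiliary function $\Phi$, now vanishes identically, so no auxiliary function is needed. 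Combined with the conformal Killing property of $X_{n+1}$ and its tangency to $\mathcal H$, which eliminate the boundary integrals, I expect the quadratic form to collapse to $Q(\varphi_{n+1},\varphi_{n+1})=-c\int_M V_{n+1}^2\,|h|^2\,dA$ for some positive constant $c$. Type-II stability then gives $\int_M V_{n+1}^2|h|^2\,dA\le 0$; since $V_{n+1}=1/x_{n+1}>0$ everywhere, this forces $|h|^2\equiv 0$, so $M$ is totally geodesic.

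The step I expect to be the main obstacle is the boundary analysis in evaluating $Q(\varphi_{n+1},\varphi_{n+1})$. The boundary term in the type-II second variation form is not the same as in the type-I form used for Theorem~\ref{thm0.2}, so the cancellation of the boundary integrals must be re-derived, combining the capillary angle condition, the tangency of $X_{n+1}$ to the horosphere, and the umbilicity of $\mathcal H$ (all principal curvatures equal to $1$). Once these boundary contributions are shown to vanish, the remaining interior identity is a routine $H=0$ specialization of the computation already carried out for Theorem~\ref{thm0.2}, and the conclusion follows at once.
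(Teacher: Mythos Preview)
Your proposal contains two misunderstandings that undermine the argument.

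First, you claim that ``the boundary term in the type-II second variation form is not the same as in the type-I form.'' This is false: comparing \eqref{stab-ineq'} and \eqref{stable-second}, the quadratic forms are \emph{identical}, with the same Jacobi operator $J$ and the same $q$ from \eqref{q'}. The only difference between type-I and type-II stability is the admissibility constraint on the test function ($\int_M\varphi\,dA=0$ versus $\int_{\partial M}\varphi\,ds=0$). Since $\varphi_{n+1}$ satisfies the latter by \eqref{bdy-zero2}, there is no new boundary analysis to redo.

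Second, and more seriously, your shortcut ``with $H=0$ \dots\ no auxiliary function is needed'' fails. You are reading off the free-boundary display \eqref{xeq1} from the introduction, where $\theta=\pi/2$, but the theorem concerns a general capillary angle. Going through the actual computation \eqref{xeq1111} in the proof of Theorem~\ref{thmm4.1}, the term you want to drop carries the coefficient $(H+n\cos\theta)$, not $H$; setting $H=0$ still leaves $n\cos\theta\int_M\bar g(x,\nu)J\varphi_{n+1}$, which is nonzero for $\theta\neq\pi/2$. Concretely, the stability inequality with $H=0$ yields
\[
\int_M V_{n+1}^2\,|h|^2\,dA\;\le\;\cos\theta\int_M \bar g(x,\nu)\,V_{n+1}\,|h|^2\,dA,
\]
which is not conclusive by itself. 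The paper therefore does \emph{not} take your shortcut: it simply observes that since the quadratic form is identical and $\varphi_{n+1}$ is admissible by \eqref{bdy-zero2}, the entire proof of Theorem~\ref{thmm4.1}---including the auxiliary function $\Phi$ of \eqref{auxi} and the boundary cancellation \eqref{xeq2133}--\eqref{xeq2134}---carries over verbatim to conclude umbilicity; minimality then gives total geodesicity. Your ``if'' direction is also incomplete: for a totally geodesic piece one has $q=\csc\theta>0$, and the second variation becomes $\int_M(|\nabla\varphi|^2+n\varphi^2)-\csc\theta\int_{\partial M}\varphi^2$, whose nonnegativity is not immediate from what you wrote.
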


In  fluid mechanics 	a capillary surface is an interface between two fluids   in the absence of gravity.
Sessile drops, liquid bridges, rivulets, and liquid drops on fibers are  examples of capillary surfaces. Various stability problems are interested in fluid mechanics, see for example
\cite{Fluids}.
 For more  information about capillary hypersurfaces, especially the physical interpretation of capillary hypersurfaces,  we refer to the book of R. Finn \cite{Finn}.

	There have been many interesting stability results for capillary hypersurfaces within other types of domains, e.g.,
	in a wedge, a slab, a cone, a cylinder or in a polygon. Here we just mention a few
	\cite{AlS,  AS, LiXiong, LiXiong2, M1, M2, pyo,RAF2, Rosales,  Souam2, V}.

	The remaining part of this paper is organized as follows. In Section \ref{sec2} we present the basic properties of capillary hypersurfaces
	supported on a horosphere. 
	In Section \ref{sec3} we prove useful geometric formulas for hypersurfaces supported on a horosphere.
	Moreover, we find an admissible test function, by proving a Minkowski type identity.  We prove Theorem \ref{thm0.2} and Theorem \ref{thm0.3} for the capillary hypersurface  in Section \ref{sec4} and  the type-II hypersurface  in Section \ref{sec5} respectively.

\section{Capillary hypersurfaces supported on a hoposphere}\label{sec2}

Let $(\bar M^{n+1}, \bar g)$ be an oriented $(n+1)$-dimensional Riemannian manifold and $B$  a domain in $\bar M$ with smooth boundary $\p B$ in $\bar M$. Let $x: (M^n, g)\to (\bar M, \bar g)$ be an
isometric immersion of an orientable $n$-dimensional compact manifold $M$ with boundary $\p M$ satisfying $x_{|\p M}: \p M \to \p B$. Such an immersion is called an immersion supported on $\p B$.
  We emphasize that in this paper any  hypersurface we consider
  is  immersed
    and  its boundary map $x_{|\p M}: \p M \to \p B$ is also only immersed. Both are  not necessarily embedded. However, for the convenience of notation, we do not distinguish $M$ with its image $x(M)$ ($\p M$ with $x(\p M)$ resp.), through all computations are in fact carried  out on $M$ by using the pull-back of $x$.

We denote by $\bar \n$, $\bar \Delta$ and $\bar \n^2$ the gradient, the Laplacian and the Hessian on $\bar M$ w.r.t. $\bar g$ respectively, while by $\n$, $\Delta$ and $\n^2$ the gradient, the Laplacian and the Hessian on $M$ w.r.t. its induced metric respectively.
We will use the following terminology for four normal vector fields.
We choose one of the unit normal vector field along $x$ and denote it by $\nu$.
We denote  by $\bar N$ the unit outward normal to $\p B$ in $B$ and $\mu$ be the unit outward normal to $\p M$ in $M$.
Let $\bar \nu $ be the unit normal to $\p M$ in $\p B$ such that the bases $\{\nu, \mu\}$ and $\{\bar \nu  , \bar N\}$ have the same orientation in the
normal bundle of $\p M\subset \bar M$. See Figure 1, where
 $\bar M =\hh^{n+1}$ and $\p B = {\mathcal H}$, a horosphere.
\begin{figure}
	\centering
	\includegraphics[height=7cm,width=13cm]{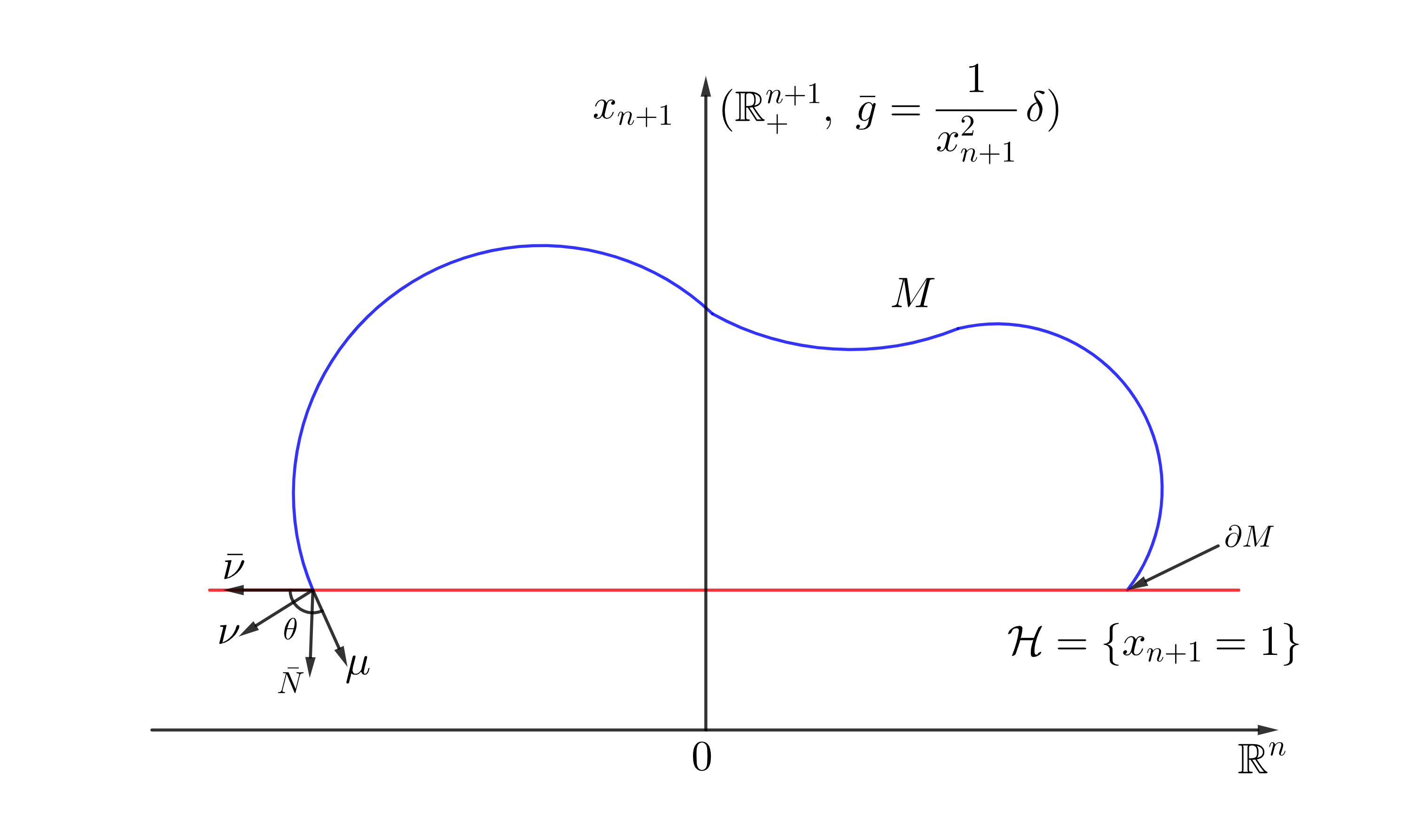}
	\caption{Hypersurface $M$  supported on  $\mathcal{H}$.}
\end{figure}


Denote by $h$ and $H$ the second fundamental form and the mean curvature of the immersion $x$ respectively. Precisely,
$h(X, Y)= \bar g(\bar \n_X \nu, Y)$ and $H=\tr_g(h).$
 The boundary map $x_{|\p M}: \p M \to \p B$ is an immersion in $\p B$. Its mean curvature is denoted by $\hat H$.




By an admissible variation of $x$ we mean a differentiable map $x: (-\epsilon, \epsilon)\times M\to\bar M$ such that $x(t, \cdot): M\to \bar M$ is an immersion satisfying $x(t, \p M)\subset\p B$ for every $t\in (-\ep, \ep)$ and $x(0, \cdot)=x$. For this variation,
the area  functional             $A: (-\ep, \ep)\to\rr$ and the volume functional $V: (-\ep, \ep)\to\rr$ are defined by
\begin{eqnarray*}
&&A(t)=\int_{ M} dA_t,\\
&&V(t)=\int_{[0,t]\times M} x^*dV_{\bar M},
\end{eqnarray*}
where $dA_t$ is the area element of $M$  with respect to the metric induced by $x(t, \cdot)$ and $dV_{\bar M}$ is the volume element of $\bar M$.
A variation is said to be volume-preserving if $V(t)=V(0)=0$ for each $t\in (-\ep, \ep)$.
Another area functional $A_W(t): (-\ep, \ep)\to\rr$, which is called wetting area functional, 
  is defined by
\begin{equation*}
  A_{W}(t)=\int_{[0,t]\times \p M} x^*dA_{\p B},
\end{equation*}
where $dA_{\p B}$ is the area element of $\p B$.
Fix a real number $\theta\in (0, \pi)$. The energy functional  $E(t): (-\ep, \ep)\to\rr$ is defined by
$$E(t)=A(t)-\cos \theta \, A_W(t).$$

The first variation formulas of $A(t)$, $A_W(t)$, $V(t)$ and $E(t)$ for an admissible variation with  a variation vector field $Y=\frac{\p}{\p t}x(t,\cdot)|_{t=0}$ are given by
\begin{eqnarray}
A'(0) &=&\int_M  H\bar g(Y, \nu) \,dA+\int_{\p M} \bar g(Y, \mu)\,ds,\label{first-var}\\
A_W'(0) &=&\int_{\partial M}\bar g(Y, \bar \nu )\, ds,\label{wet-first-var}\\
V'(0)&=&\int_M \bar g(Y, \nu)dA,\label{volume-var}\\
E'(0)&=&\int_M H\bar g(Y, \nu)dA+\int_{\p M} \bar g(Y, \mu-\cos \theta \, \bar \nu  )ds,\label{energy-var}
\end{eqnarray}
where $dA$ and $ds$ are the area element of $M$ and $\p M$ respectively. For the proof see e.g. \cite{RS, WX}.
\begin{definition}
An immersion $x: M\to \bar{M}$ with boundary $\partial M$ supported on $\p B$ is said to be {\it capillary} if it is a critical point of the energy functional $E$ for any volume-preserving variation of $x$.
\end{definition}
It follows from the above  first variation formulas \eqref{volume-var} and \eqref{energy-var} that $x$ is capillary if and only if $x$ has constant mean curvature and $\p M$ intersects $\p B$ at an angle which  equals to the constant  $\theta$.
When $\theta =\frac \pi 2$, a capillary hypersurface is a free boundary CMC hypersurface.

 For each smooth function $\vp$ on $M$ with $\int_M \vp dA=0$, there exists an admissible volume-preserving variation of $x$ with the variation vector field
 having $\varphi \nu$ as its normal part (see  \cite{RS}, page 348).
When $x$ is a capillary hypersurface, for an admissible volume-preserving variation with respect to $\vp$, the second variational formula of $E$ is given by
\begin{eqnarray}\label{stab-ineq}
&&E''(0)=-\int_M\vp(\De \vp+(|h|^2+\overline{{\rm Ric}}(\nu,\nu))\vp) dA+\int_{\p M} \vp(\n_\mu \vp-q \vp)ds.
\end{eqnarray}
Here
\begin{equation}
\label{q}
q={\csc \th}h^{\p B}(\bar \nu  , \bar \nu )+\cot \th \, h(\mu,\mu),\end{equation}
$\overline{\rm Ric}$ is the Ricci curvature tensor of $\bar M$,  
and $h^{\p B}$ is the second fundamental form of $\p B$ in $\bar M$ given by $h^{\p B}(X, Y)= \bar g(\bar \n_X \bar N, Y)$, see e.g. \cite{RS}.

\begin{definition}
A capillary hypersurface is called stable if $E''(0)\ge 0$ for all volume-preserving variations, that is, \begin{eqnarray}\label{stable1}
E''(0)\ge 0,\qquad \forall \vp\in C^{\infty}(M) \hbox{ satisfies }\int_M \varphi dA=0.
\end{eqnarray}
\end{definition}




The following proposition is a well-known and fundamental  fact for capillary hypersurfaces when $\p B$ is umbilical in $\bar M$.  See e.g. \cite[Proposition 2.1]{WX}. 
 \begin{prop}\label{Proposition1} Assume  $\p B$ is umbilical in $\bar M$. Let $x: M\to \bar{M}$ be an immersion whose boundary $\p M$ intersects $\partial B$ at a constant angle $\th$. Then $\mu$ is a principal direction of $\p M$ in $M$. Namely, $h(e, \mu)=0$ for any $e\in T(\p M)$.
\end{prop}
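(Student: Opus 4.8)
The plan is to argue pointwise along $\p M$, using that at each boundary point the normal plane of $\p M$ in $\bar M$ carries two positively oriented orthonormal frames, $\{\nu,\mu\}$ and $\{\bar\nu,\bar N\}$, related by a rotation through the fixed contact angle $\theta$. With the orientation convention introduced above one may write, along $\p M$,
\begin{equation*}
\nu=\cos\theta\,\bar\nu-\sin\theta\,\bar N,\qquad \mu=\sin\theta\,\bar\nu+\cos\theta\,\bar N,
\end{equation*}
and the decisive structural point is that the coefficients are \emph{constant}, since $\theta$ is constant along $\p M$. The precise signs are immaterial for the conclusion.

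First I would start from the definition $h(e,\mu)=\bar g(\bar\n_e\nu,\mu)$ for $e\in T(\p M)$, substitute the expression for $\nu$, and use $e(\theta)=0$ so that no derivative-of-angle terms survive:
\begin{equation*}
h(e,\mu)=\cos\theta\,\bar g(\bar\n_e\bar\nu,\mu)-\sin\theta\,\bar g(\bar\n_e\bar N,\mu).
\end{equation*}
I would then expand each factor against $\mu=\sin\theta\,\bar\nu+\cos\theta\,\bar N$ and simplify with three elementary facts: $\bar N$ and $\bar\nu$ are unit, so $\bar g(\bar\n_e\bar N,\bar N)=\bar g(\bar\n_e\bar\nu,\bar\nu)=0$; differentiating $\bar g(\bar\nu,\bar N)=0$ along $e$ gives $\bar g(\bar\n_e\bar\nu,\bar N)=-\bar g(\bar\nu,\bar\n_e\bar N)$; and, since $e$ and $\bar\nu$ are both tangent to $\p B$, the quantity $\bar g(\bar\n_e\bar N,\bar\nu)$ is precisely the value $h^{\p B}(e,\bar\nu)$ of the second fundamental form of $\p B$. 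Carrying out the bookkeeping, the two terms combine and the $\theta$-dependence cancels through $\sin^2\theta+\cos^2\theta=1$, leaving the clean identity
\begin{equation*}
h(e,\mu)=-\,h^{\p B}(e,\bar\nu).
\end{equation*}

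The crux --- the single step that powers the whole statement --- is the vanishing of this right-hand side under the umbilicity hypothesis. Since $\p B$ is umbilical, $h^{\p B}=\lambda\,\bar g$ on $T(\p B)$ for some function $\lambda$, so
\begin{equation*}
h^{\p B}(e,\bar\nu)=\lambda\,\bar g(e,\bar\nu)=0,
\end{equation*}
because $\bar\nu$ is by definition the unit normal to $\p M$ inside $\p B$ while $e$ is tangent to $\p M$, whence $e\perp\bar\nu$. Therefore $h(e,\mu)=0$ for every $e\in T(\p M)$, which is exactly the assertion that $\mu$ is a principal direction of $\p M$ in $M$. I expect no genuine obstacle beyond organizing the frame computation correctly: the two hypotheses enter at exactly one place each --- constancy of $\theta$ removes the angle-derivative terms, and umbilicity of $\p B$ kills the surviving cross term $h^{\p B}(e,\bar\nu)$ --- so the argument is robust, and the intermediate identity $h(e,\mu)=-h^{\p B}(e,\bar\nu)$ isolates precisely where the umbilicity of the support is needed.
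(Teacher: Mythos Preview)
Your argument is correct. The paper itself does not supply a proof of this proposition; it simply records it as a well-known fact and refers to \cite[Proposition~2.1]{WX}. Your computation is the standard one behind that reference: rotate $\{\nu,\mu\}$ into $\{\bar\nu,\bar N\}$ by the constant angle, differentiate, and reduce $h(e,\mu)$ to $-h^{\p B}(e,\bar\nu)$, which umbilicity kills via $\bar g(e,\bar\nu)=0$.

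One small remark on conventions: the specific rotation you wrote, $\nu=\cos\theta\,\bar\nu-\sin\theta\,\bar N$ and $\mu=\sin\theta\,\bar\nu+\cos\theta\,\bar N$, differs from the paper's later choice \eqref{mu0}--\eqref{nu0} (effectively $\theta\mapsto \tfrac{\pi}{2}-\theta$). As you yourself note, this is immaterial for the conclusion, since the identity $h(e,\mu)=-h^{\p B}(e,\bar\nu)$ and its vanishing survive any fixed rotation in the normal plane; but if you want strict consistency with the rest of the paper you should use \eqref{mu0}--\eqref{nu0}.
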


From now on, we consider the ambient manifold $\bar M$ to be the hyperbolic space $\hh^{n+1}$.
We make a convention on the choice of $\nu$ to be
the opposite direction of mean curvature vector so that the  mean curvature of a spherical cap  is positive. Under this convention, along $\p M$, the angle between $-\nu$ and $\bar N$ or equivalently between $\mu$ and $\bar \nu$ is equal to $\th$ (see Figure 1). To be more precise, in the normal bundle of $\p M$, we have the following relations:
 \begin{eqnarray}
&&\mu=\sin \th \, \bar N+\cos \th\, \bar \nu ,  \label{mu0}
\\&&\nu=-\cos \th \, \bar N+\sin \th\, \bar \nu  .\label{nu0}
\end{eqnarray}
Equivalently,
 \begin{eqnarray}
&&\bar{N}=\sin \th \, \mu-\cos \th\, \nu,  \label{Nbar}
\\&&\bar \nu =\cos\theta\, \mu+\sin \th\, \nu.\label{nubar}
\end{eqnarray}


 In this paper  we use the upper half-space model for the hyperbolic space $\hh^{n+1}$, which is denoted by
\begin{eqnarray}\label{half-space}
\hh^{n+1}=\{x=(x_{1}, x_{2},\cdots,x_{n+1})\in \rr^{n+1}_+: x_{n+1}>0\},\quad \bar g=\frac{1}{x_{n+1}^2}\delta.
\end{eqnarray}
A horosphere, a ``sphere" in $\hh^{n+1}$ whose centre lies at $\partial_{\infty}\mathbb{H}^{n+1}$, up to a hyperbolic isometry,  is given by the horizontal plane
\begin{equation*}
\mathcal{H}=\{x\in\mathbb{R}^{n+1}_{+}:x_{n+1}=1\}.
\end{equation*}
By choosing $\bar N=-E_{n+1}=(0,\cdots, 0, -1)$, all principal curvatures of a horosphere are $\kappa=1$. Moreover, by the Gauss equation, the induced metric on a horosphere is flat and in fact a horosphere is isometric to the $n$-dimensional Euclidean space $\rr^{n}$.

We use $x$ to denote the position vector in $\hh^{n+1}$ and $\bar{\nabla}$  the Levi-Civita connection of $\hh^{n+1}$. Let $\{E_i\}_{i=1}^{n+1}$ be the canonical basis of $\rr^{n+1}$.
We use $\<\cdot ,\cdot\>$ and $\bar g$ to denote the inner product of $\rr^{n+1}$ and $\hh^{n+1}$ respectively,  $D$ and $\bar \nabla$ to denote the Levi-Civita connection of $\rr^{n+1}$ and $\hh^{n+1}$ respectively.  Let $\bar E_i = x_{n+1} E_i$. Then $\{ \bar E_i\}_{i=1}^{n+1}$  is an orthonormal basis of $\hh^{n+1}$.
The relationship of $\bar{\nabla}$ and $D$ is given by
\begin{equation}\label{YZ}
  \bar{\nabla}_{Y}Z=D_{Y}Z-Y(\ln x_{n+1})Z-Z(\ln x_{n+1})Y+\langle Y,Z\rangle D(\ln x_{n+1}).
\end{equation}
It follows easily that
\begin{eqnarray}  \label{a1}
\bar \n_Y x &=& -\bar g (Y, \bar E_{n+1})x + \bar g (Y, x ) \bar E_{n+1}, \\ \label{a2}
\bar \n_Y  E_\a &=& -\bar g (Y, \bar E_{n+1}) E_\a + \bar g (Y, \bar E_\a) E_{n+1}, \quad \forall \a=1,2, \cdots, n, \\ \label{a3}
\bar \n_Y E_{n+1}  &=& - \frac 1{x_{n+1}} Y,
\end{eqnarray}
for any vector $Y$ in $\hh^{n+1}$, which will be used in many times.

The following simple facts  play an important role in our paper.
\begin{prop}\label{lem2.1} \

$(\rm i)$  The vector fields  \,$x$ and $\{E_\alpha\}_{\alpha=1}^{n}$ are Killing vector fields in $\hh^{n+1}$, i.e,
\begin{eqnarray}\label{xKilling}
\frac12\big( \bar g (\bar \n_i x, E_j) + \bar g (\bar \n_j x, E_i)
 ) =\frac12\big( \bar g( \bar \n_i E_{\alpha}, E_j)+
 \bar g( \bar \n_j E_{\alpha}, E_i)
 \big)=0.
\end{eqnarray}

$(\rm {ii})$\,$E_{n+1}$ is a conformal Killing vector field in $\hh^{n+1}$, i.e,
\begin{eqnarray}\label{En-Killing}
\frac12\big (\bar g( \bar \n_i E_{n+1}, E_j)+
\bar g( \bar \n_j E_{n+1}, E_i)
\big)= -\frac{1}{x_{n+1}}\bar{g}_{ij}.
\end{eqnarray}
Here $\bar \n_i = \bar \n _{E_i}$ and $\bar g_{ij}= \bar g(E_i, E_j)$.
\end{prop}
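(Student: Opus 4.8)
The plan is to use the standard reformulation of the (conformal) Killing property in terms of the covariant derivative: a vector field $V$ on $\hh^{n+1}$ is Killing exactly when the bilinear form $(Y,Z)\mapsto \bar g(\bar\n_Y V, Z)$ is skew-symmetric, and conformal Killing exactly when its symmetric part is a pointwise scalar multiple of $\bar g$. With this in hand, the whole statement reduces to reading off the symmetric and skew-symmetric parts of the three covariant-derivative identities \eqref{a1}, \eqref{a2} and \eqref{a3}, which are already granted; no further geometric input is required.

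For part $(\rm i)$, I would substitute $Y=E_i$ and pair against $E_j$ in \eqref{a1} to obtain
\[
\bar g(\bar\n_{E_i} x, E_j)=\bar g(E_i,x)\,\bar g(\bar E_{n+1},E_j)-\bar g(E_i,\bar E_{n+1})\,\bar g(x,E_j),
\]
and then note that the right-hand side is the value on $(E_i,E_j)$ of the two-form $\omega(Y,Z)=\bar g(x,Y)\bar g(\bar E_{n+1},Z)-\bar g(x,Z)\bar g(\bar E_{n+1},Y)$, hence manifestly skew in $i\leftrightarrow j$; symmetrizing therefore gives zero, which is the first equality in \eqref{xKilling}. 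The same computation applied to \eqref{a2}, after writing $\bar E_\alpha=x_{n+1}E_\alpha$ and $\bar E_{n+1}=x_{n+1}E_{n+1}$, yields
\[
\bar g(\bar\n_{E_i}E_\alpha,E_j)=x_{n+1}\big(\bar g(E_i,E_\alpha)\bar g(E_{n+1},E_j)-\bar g(E_i,E_{n+1})\bar g(E_\alpha,E_j)\big),
\]
again a skew form in $i,j$ up to the scalar $x_{n+1}$, so its symmetric part vanishes and the second equality in \eqref{xKilling} follows. For part $(\rm{ii})$, formula \eqref{a3} gives directly $\bar g(\bar\n_{E_i}E_{n+1},E_j)=-\tfrac{1}{x_{n+1}}\bar g(E_i,E_j)=-\tfrac{1}{x_{n+1}}\bar g_{ij}$, which is already symmetric and proportional to the metric, so \eqref{En-Killing} holds exactly.

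Because \eqref{a1}--\eqref{a3} are taken as given, there is essentially no genuine obstacle here; the content of the proposition sits entirely in those three identities. The only point that demands care is the bookkeeping between the coordinate frame $\{E_i\}$ and the orthonormal frame $\{\bar E_i=x_{n+1}E_i\}$, together with the relation $\bar g_{ij}=x_{n+1}^{-2}\delta_{ij}$, so that the mixed appearance of $E$ and $\bar E$ in \eqref{a2} is correctly recognized to assemble into a bona fide skew form rather than leaving a spurious symmetric remainder. If one prefers to avoid the exterior-algebra shorthand, I would instead expand the four terms of each symmetrization explicitly and verify the two pairwise cancellations directly, which is routine.
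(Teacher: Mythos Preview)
Your proof is correct and follows essentially the same route as the paper: both arguments take the identities \eqref{a1}--\eqref{a3} as given, pair against $E_j$, and observe that the resulting expressions are (for $x$ and $E_\alpha$) values of an antisymmetric bilinear form, while for $E_{n+1}$ the expression is already a scalar multiple of $\bar g_{ij}$. Your write-up is slightly more explicit in unpacking $\bar E_\alpha = x_{n+1}E_\alpha$ and in naming the two-form $\omega$, but the substance is identical.
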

\begin{proof} These are clearly  well-known facts. For convenience of the reader, we provide the proof.
By \eqref{a1}, we have
$$ \bar g (\bar \n_i x, E_j) = -\bar g(E_i, \bar E_{n+1} ) \bar g (x, E_j) +\bar g(E_j, x) \bar g(\bar E_{n+1} , E_j),$$
which  is antisymmetry in $i$ and $j$, and hence $x$ is Killing. The proof for $E_\a$ is similar, following from \eqref{a2}.
%
From  \eqref{a3} we have
$$\bar g( \bar \n_i E_{n+1}, E_j) = -\frac 1 {x_{n+1}} \bar g_{ij}, $$
hence $E_{n+1}$ is a conformal Killing field with the conformal factor $-\frac{1}{x_{n+1}}$.
\end{proof}

Now we introduce a conformal Killing vector field $X_{n+1}$ and a function $V_{n+1}$ in $\hh^{n+1}$ that  we will use
later. Denote
  \begin{equation}\label{cfkill2}
     X_{n+1}=x-E_{n+1}, \quad V_{n+1}=\frac{1}{x_{n+1}}.
  \end{equation}
  From Proposition \ref{lem2.1} it is clear that
\begin{prop}\label{xaa}
$(\rm i)$\,$X_{n+1}$ is a conformal Killing vector field with $\frac{1}{2}\mathcal{L}_{X_{n+1}}\bar{g}=V_{n+1}\bar{g}$, namely
\begin{eqnarray}\label{XXaeq1}
\frac12\big[\bar \n_i (X_{n+1})_j+ \bar \n_j (X_{n+1})_i\big]=V_{n+1}\bar{g}_{ij}.
\end{eqnarray}
 $ ({\rm ii})$ $X _{n+1}\mid_{\mathcal{H}}$ is a tangential vector field on $\mathcal{H}$, i.e.,
\begin{eqnarray}\label{XXaeq2}
\bar{g}(X_{n+1}, \bar{N})=0 \quad\text{\rm on}\,\,\mathcal{H}.
\end{eqnarray}
\end{prop}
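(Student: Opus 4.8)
The plan is to deduce both statements directly from Proposition \ref{lem2.1} together with the explicit form of the metric in the half-space model \eqref{half-space}; neither part requires any genuinely new computation, since the real content has already been packaged into the preceding proposition.

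For part (i), I would simply exploit the linearity of the symmetrized covariant derivative (equivalently, of the Lie derivative) in the vector field. Since $X_{n+1}=x-E_{n+1}$, one has
\[
\tfrac12\big[\bar\n_i(X_{n+1})_j+\bar\n_j(X_{n+1})_i\big]
=\tfrac12\big[\bar\n_i x_j+\bar\n_j x_i\big]
-\tfrac12\big[\bar\n_i(E_{n+1})_j+\bar\n_j(E_{n+1})_i\big].
\]
By \eqref{xKilling} the first bracket vanishes because $x$ is Killing, and by \eqref{En-Killing} the second bracket equals $-\frac{1}{x_{n+1}}\bar g_{ij}$. Hence the right-hand side is $\frac{1}{x_{n+1}}\bar g_{ij}=V_{n+1}\bar g_{ij}$, which is exactly \eqref{XXaeq1}.

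For part (ii), I would carry out a direct inner-product computation on $\mathcal H$, where $\bar N=-E_{n+1}$ and $x_{n+1}=1$. Writing the position vector as $x=\sum_i x_iE_i$ and using $\bar g=\frac{1}{x_{n+1}^2}\delta$, one obtains $\bar g(x,E_{n+1})=\frac{1}{x_{n+1}}$ and $\bar g(E_{n+1},E_{n+1})=\frac{1}{x_{n+1}^2}$, both valid everywhere. Therefore
\[
\bar g(X_{n+1},\bar N)=-\bar g(x-E_{n+1},E_{n+1})
=-\Big(\tfrac{1}{x_{n+1}}-\tfrac{1}{x_{n+1}^2}\Big),
\]
which vanishes once we restrict to $\mathcal H$, where $x_{n+1}=1$. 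This establishes \eqref{XXaeq2}.

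There is essentially no obstacle here. The only points requiring care are bookkeeping ones: correctly identifying $\bar N=-E_{n+1}$ as the outward unit normal of the horosphere (and checking it is indeed a unit vector, since $\bar g(E_{n+1},E_{n+1})=x_{n+1}^{-2}=1$ on $\mathcal H$), and remembering that Euclidean inner products acquire the conformal factor $x_{n+1}^{-2}$ when measured with $\bar g$. Once these conventions are fixed, both claims are immediate consequences of \eqref{xKilling}, \eqref{En-Killing} and the definition \eqref{cfkill2}.
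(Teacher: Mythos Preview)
Your proof is correct and follows exactly the approach the paper indicates: the paper simply asserts that the proposition is clear from Proposition~\ref{lem2.1}, and you have spelled out precisely how---part (i) by linearity of the symmetrized covariant derivative applied to $X_{n+1}=x-E_{n+1}$ together with \eqref{xKilling} and \eqref{En-Killing}, and part (ii) by the direct inner-product computation on $\mathcal H$. There is nothing to add.
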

\begin{prop}\label{xaa2} \, $V_{n+1}$ satisfies the following properties:
\
\begin{eqnarray}\label{va2}
  \bar{\n}^2 V_{n+1}&=& V_{n+1}  \bar{g } \quad\quad\text{in}\,\, \hh^{n+1},\\
\partial_{\bar{N}}V_{n+1}&=& V_{n+1} \quad  \quad\,\,\text{on}\,\, \mathcal{H}.
\end{eqnarray}
\end{prop}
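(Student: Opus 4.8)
The plan is to reduce everything to a single clean identity, namely $\bar\n V_{n+1}=-E_{n+1}$, from which the Hessian formula \eqref{va2} follows by one more covariant differentiation and the boundary condition follows by a pointwise evaluation on $\mathcal{H}$.

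First I would compute the gradient of $V_{n+1}=1/x_{n+1}$ directly in the half-space coordinates. The Euclidean gradient of $x_{n+1}^{-1}$ is $-x_{n+1}^{-2}E_{n+1}$, and since $\bar g=x_{n+1}^{-2}\delta$ has inverse $\bar g^{ij}=x_{n+1}^{2}\delta^{ij}$, raising the index with $\bar g$ contributes a factor $x_{n+1}^2$ and yields $\bar\n V_{n+1}=-E_{n+1}$. Equivalently, one verifies $E_j(V_{n+1})=\bar g(-E_{n+1},E_j)$ for each basis field $E_j$: the left side vanishes for $j\le n$ and equals $-x_{n+1}^{-2}$ for $j=n+1$, matching $-\bar g(E_{n+1},E_j)=-x_{n+1}^{-2}\delta_{(n+1)j}$.

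For the Hessian identity I would differentiate once more. For arbitrary vector fields $Y,Z$,
\[
\bar\n^2 V_{n+1}(Y,Z)=\bar g(\bar\n_Y\bar\n V_{n+1},Z)=-\bar g(\bar\n_Y E_{n+1},Z).
\]
Now I invoke \eqref{a3}, which gives $\bar\n_Y E_{n+1}=-\tfrac{1}{x_{n+1}}Y=-V_{n+1}Y$. Substituting produces $\bar\n^2 V_{n+1}(Y,Z)=V_{n+1}\bar g(Y,Z)$, that is, $\bar\n^2 V_{n+1}=V_{n+1}\bar g$ as asserted in \eqref{va2}. This is essentially the only computational step, and it is immediate once the gradient is known, since \eqref{a3} does all the work; I do not anticipate any genuine obstacle here beyond keeping the sign conventions consistent.

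For the boundary condition I would evaluate the normal derivative on $\mathcal{H}$, where $\bar N=-E_{n+1}$ and $x_{n+1}=1$. Using the gradient identity,
\[
\partial_{\bar N}V_{n+1}=\bar g(\bar\n V_{n+1},\bar N)=\bar g(-E_{n+1},-E_{n+1})=\bar g(E_{n+1},E_{n+1})=\tfrac{1}{x_{n+1}^2},
\]
which equals $1=V_{n+1}$ on $\mathcal{H}$. The one subtlety to flag is that the stated equality $\partial_{\bar N}V_{n+1}=V_{n+1}$ is an identity of functions \emph{restricted to} $\mathcal{H}$: the general normal derivative computes to $V_{n+1}^2$, and it is precisely the defining value $x_{n+1}=1$ of the horosphere (where $V_{n+1}=1$) that collapses $V_{n+1}^2$ into $V_{n+1}$. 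So the only care needed is not to read the second formula of the proposition as a global identity on $\hh^{n+1}$.
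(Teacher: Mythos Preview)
Your proof is correct. The paper itself does not give a proof of this proposition; it simply cites \cite[Proposition 2.2]{GX2}. Your argument supplies a clean self-contained verification: compute $\bar\nabla V_{n+1}=-E_{n+1}$ directly from $\bar g^{ij}=x_{n+1}^2\delta^{ij}$, then invoke \eqref{a3} for the Hessian and evaluate on $\mathcal{H}$ using $\bar N=-E_{n+1}$, $x_{n+1}=1$. Your closing remark that $\partial_{\bar N}V_{n+1}=V_{n+1}^2$ in general and only collapses to $V_{n+1}$ because $V_{n+1}|_{\mathcal H}=1$ is a useful clarification.
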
 \begin{proof} See \cite[Proposition 2.2]{GX2}.\end{proof}

The stability inequality (see \eqref{stab-ineq}, \eqref{q} and \eqref{stable1}) of capillary hypersurfaces  supported on $\mathcal{H}$ reduces to the following:
\begin{eqnarray}\label{stab-ineq'}
&&E''(0)=-\int_M\vp J\vp dA+\int_{\p M} \vp(\n_\mu \vp-q \vp)ds\ge 0, \qquad \forall \vp  \hbox{ with }\int_M \varphi dA=0
\end{eqnarray}
where
\begin{eqnarray}\label{jacobi}
J=\Delta+(|h|^2-n)
\end{eqnarray}
is the Jacobi operator and
\begin{equation}\label{q'}
q={\csc \th}+\cot \th \, h(\mu,\mu).\end{equation}

It is known that that any umbilical capillary hypersurfaces supported on a horosphere
are stable.
\begin{prop}\label{stable-necess} Any umbilical capillary hypersurface supported on the horosphere $\mathcal{H}$ is stable.
\end{prop}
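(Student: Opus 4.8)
The plan is to turn the stability inequality into a spectral statement and to exploit the ambient isometries that preserve the horosphere. Integrating \eqref{stab-ineq'} by parts, the second variation at an umbilical hypersurface takes the form
\[
E''(0)=Q(\varphi):=\int_M\Big(|\nabla\varphi|^2-(|h|^2-n)\varphi^2\Big)\,dA-\int_{\partial M}q\,\varphi^2\,ds ,
\]
and umbilicity $h=\tfrac Hn g$ makes the bulk potential $c:=|h|^2-n=\tfrac{H^2}{n}-n$ and the Robin coefficient $q=\csc\theta+\tfrac Hn\cot\theta$ (see \eqref{q'}) constant. Thus I must show $Q(\varphi)\ge0$ for every $\varphi$ with $\int_M\varphi\,dA=0$, equivalently that $c$ lies at or below the bottom of the mixed (Robin) spectrum of $-\Delta$ restricted to mean-zero functions.

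First I would produce the marginal directions. The horizontal translations generated by $E_1,\dots,E_n$ are isometries of $\hh^{n+1}$ fixing $\mathcal H$ and preserving the contact angle, so each $u_\alpha:=\bar g(E_\alpha,\nu)$ is a Jacobi field, $\Delta u_\alpha+c\,u_\alpha=0$, subject to the Robin condition $\nabla_\mu u_\alpha=q\,u_\alpha$; in particular $Q(u_\alpha)=0$. Comparing the first variation formulas \eqref{first-var} and \eqref{wet-first-var} with the invariance of $A$ and $A_W$ under these isometries yields $H\int_M u_\alpha\,dA=0$, so when $H\ne0$ the $u_\alpha$ are admissible, they attain equality in $Q\ge0$, and they exhibit $c$ as an eigenvalue of the mixed problem $-\Delta\varphi=\sigma\varphi,\ \nabla_\mu\varphi=q\varphi$ on the mean-zero space. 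The content is then exactly that $c$ is the lowest such eigenvalue.

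To close this I would invoke the standard criterion for constrained stability: the lowest eigenfunction $w>0$ of $L:=-\Delta-c$ with $\nabla_\mu w=qw$ is positive, while the marginal modes satisfy $\int_M u_\alpha\,dA=0$; hence $Q\ge0$ on $\{\int_M\varphi=0\}$ reduces to a single sign condition, $\int_M G\,dA\le0$, where $G$ solves $LG=1$ with the same Robin data (equivalently, via the ground-state substitution $\varphi=w\psi$, to a weighted Poincar\'e inequality for $\psi$ on the mean-zero class). The explicit function $V_{n+1}=1/x_{n+1}$ is the main tool: $\bar\nabla V_{n+1}=-E_{n+1}$ together with $\bar\nabla^2 V_{n+1}=V_{n+1}\bar g$ gives $\Delta V_{n+1}=nV_{n+1}+H\bar g(E_{n+1},\nu)$ on $M$, while Proposition \ref{xaa2} controls its Robin trace; combined with the Killing field $x$, whose normal part obeys $\Delta\bar g(x,\nu)+c\,\bar g(x,\nu)=0$, and the conformal field $X_{n+1}$ of \eqref{cfkill2}, these allow one to construct $G$ (or to bound $\int_M G$) by explicit integration by parts. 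The free-boundary totally geodesic piece is the transparent special case: there $\theta=\pi/2$, $c=-n$, and $V_{n+1}$ is itself a positive solution of $\Delta V_{n+1}-nV_{n+1}=0$ satisfying $\nabla_\mu V_{n+1}=qV_{n+1}$, so the bottom eigenvalue of $L$ is $0$ and $Q\ge0$ holds without any constraint.

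The main obstacle is precisely the spectral ordering in the previous step, i.e. showing that $c$ is the first constrained eigenvalue (equivalently $\int_M G\,dA\le0$). This is where the Robin boundary term and the volume constraint interact most delicately, and where the explicit rotational geometry of the four umbilical types has to be fed in through $V_{n+1}$ and the Minkowski-type structure of $X_{n+1}$; the remaining work is routine integration by parts and sign-tracking.
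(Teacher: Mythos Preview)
The paper does not prove this proposition by spectral analysis at all. Immediately after stating it, the authors simply remark that umbilical capillary pieces are in fact the minimizers of the capillary energy functional $E$ at fixed enclosed volume (the relative isoperimetric problem on one side of a horosphere), and a minimizer automatically has nonnegative second variation. So the paper's ``proof'' is a one-line appeal to a known variational fact, not an eigenvalue computation.

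Your route is genuinely different and much harder. The reduction to the quadratic form $Q$ and the identification of the translation Jacobi fields $u_\alpha=\bar g(E_\alpha,\nu)$ are fine, and the constrained-stability criterion you quote (sign of $\int_M G$ for $LG=1$ with Robin data, or equivalently the Barbosa--do Carmo style argument) is the right abstract framework. But you have not actually executed the decisive step: verifying that $c=\tfrac{H^2}{n}-n$ is the bottom of the Robin spectrum on mean-zero functions, or equivalently that $\int_M G\,dA\le0$. You say yourself this is ``the main obstacle,'' and nothing in the proposal closes it. The functions $V_{n+1}$, $\bar g(x,\nu)$, $\bar g(X_{n+1},\nu)$ that you list do not obviously assemble into a solution of $LG=1$ with the correct Robin boundary condition for general $(H,\theta)$; the clean case you work out ($\theta=\pi/2$, $H=0$, where $V_{n+1}$ is the positive ground state) is only one of the four umbilical types. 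Carrying this through case by case for equidistant, horospherical, and geodesic-sphere caps with arbitrary $\theta$ is essentially re-deriving the capillary isoperimetric inequality by hand.

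In short: the paper's argument is soft (minimizer $\Rightarrow$ stable), yours is hard (direct second-variation estimate), and the hard part of yours is exactly the part you left open.
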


In fact  these are minimizing among embedded hypersurfaces. For embedded hypersurfaces
one can define global functionals $A$, $V$ and  $A_W$ for the area, the enclosed and the wetting area respectively, and  the global  energy functional $E$. Therefore,
one can consider the minimizer of $E$ among the fixed enclosed volume $V$.

\

\section{Key formulae for capillary hypersurfaces supported on a horosphere }\label{sec3}
In this Section
we show useful facts  about capillary hypersurfaces $x: M\to\hh^{n+1}$ supported on  $\mathcal{H}$  that we will
use later.
For simplicity of the notation, we will omit writing the volume form $dA$ on $M$ and the area form $ds$ on $\p M$.

  We begin with  a relation between $H$ and $\hat H$, where $\hat{H}$ is the mean curvature of $\partial \S$ on $\mathcal{H}$.
\begin{prop} Along $\p M$ we have \begin{equation}
	h(\mu, \mu)=H-\sin\theta \hat{H}+(n-1)\cos\theta.\label{hmumu}
\end{equation}
\end{prop}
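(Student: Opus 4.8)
The plan is to carry out a standard but sign-sensitive frame computation at each point $p\in\p M$, relating the three hypersurfaces $M$, $\mathcal{H}$ and $\p M$ through the mean curvature vector of $\p M$ in the ambient space $\hh^{n+1}$. Fix $p\in\p M$ and choose an orthonormal basis $\{e_1,\dots,e_{n-1}\}$ of $T_p(\p M)$. The normal space of $\p M$ in $\hh^{n+1}$ is the $2$-plane spanned by either orthonormal pair $\{\bar N,\bar\nu\}$ or $\{\mu,\nu\}$; the identities \eqref{mu0}--\eqref{nubar} say these two bases are related by a rotation through the constant angle $\theta$. I would introduce the mean curvature vector $\vec H_{\p M}=\sum_{i=1}^{n-1}(\bar\nabla_{e_i}e_i)^{\perp}$ of $\p M$ in $\hh^{n+1}$, where $\perp$ denotes projection onto this normal $2$-plane, and then compute its components against each of the four normal fields. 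This is where the constancy of $\theta$ along $\p M$ enters, since it makes the rotation relating the two normal frames pointwise-independent.

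Next I would evaluate the three scalar curvatures obtained by pairing $\vec H_{\p M}$ with $\bar N$, $\bar\nu$ and $\nu$, using $\bar g(\bar\nabla_{e_i}e_i,W)=-\bar g(e_i,\bar\nabla_{e_i}W)$ for any $W$ normal to $\p M$. Pairing with $\bar N$ gives $-\sum_i h^{\p B}(e_i,e_i)$; since $\bar N=-E_{n+1}$ makes all principal curvatures of the horosphere equal to $1$ (so $h^{\p B}=\bar g$ on $\mathcal{H}$), this equals $-(n-1)$. Pairing with $\bar\nu$ gives $-\hat H$ by the definition of the mean curvature of $\p M$ in $\mathcal{H}$ (with the convention $\hat h(X,Y)=\bar g(\bar\nabla_X\bar\nu,Y)$, consistent with that for $h$), and pairing with $\nu$ gives $-\sum_{i=1}^{n-1}h(e_i,e_i)=-(H-h(\mu,\mu))$, the last equality coming from completing $\{e_i\}$ to the orthonormal frame $\{e_1,\dots,e_{n-1},\mu\}$ of $T_pM$ and taking the full trace that defines $H$.

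Finally I would expand $\vec H_{\p M}$ in the orthonormal basis $\{\bar N,\bar\nu\}$, namely $\vec H_{\p M}=-(n-1)\bar N-\hat H\,\bar\nu$, and take its inner product with $\nu=-\cos\theta\,\bar N+\sin\theta\,\bar\nu$ from \eqref{nu0}. This yields $\bar g(\vec H_{\p M},\nu)=(n-1)\cos\theta-\sin\theta\,\hat H$, and equating with the value $-(H-h(\mu,\mu))$ from the previous step gives exactly \eqref{hmumu}. The geometric content is elementary once the mean curvature vector of $\p M$ is projected onto the two orthonormal bases of the normal $2$-plane; I expect the only real difficulty to be bookkeeping of signs and orientations, i.e.\ using the conventions for $\hat H$, for $h^{\p B}$, and for the normal frames in \eqref{mu0}--\eqref{nubar} consistently throughout. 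Note that Proposition \ref{Proposition1} is not actually needed here, since the trace relation $H=\sum_{i}h(e_i,e_i)+h(\mu,\mu)$ holds for any choice of the frame $\{e_i\}$.
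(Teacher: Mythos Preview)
Your proof is correct and is essentially the same as the paper's. The paper writes the identity compactly as $h(\mu,\mu)=H-\div_{\p M}\nu$ and then expands $\nu=-\cos\theta\,\bar N+\sin\theta\,\bar\nu$ to get $\div_{\p M}\nu=\sin\theta\,\hat H-(n-1)\cos\theta$; your mean-curvature-vector formulation computes exactly the same traces $\sum_i \bar g(\bar\nabla_{e_i}\nu,e_i)$ (up to the sign flip $\bar g(\bar\nabla_{e_i}e_i,W)=-\bar g(e_i,\bar\nabla_{e_i}W)$) by pairing $\vec H_{\p M}$ against the two orthonormal normal frames, so the underlying calculation is identical.
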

\begin{proof}
	By \eqref{nu0} and the fact that the principal curvature of  $\mathcal{H}$ is 1, we have
	\begin{equation*}
	h(\mu, \mu)=H-\div_{\partial M}\nu=H-\div_{\partial M}(\sin\theta\bar \nu -\cos\theta \bar{N})=H-\sin\theta \hat{H}+(n-1)\cos\theta.
	\end{equation*}
\end{proof}
\

The following Minkowski type formula will be used later.

\begin{prop}\label{prop-integral}Let $x: M\to\hh^{n+1}$ be an isometric immersion supported on $\mathcal{H}$. Assume $x(M)$ intersects $\mathcal{H}$ at a constant contact angle $\th \in (0, \pi)$. Then
\begin{eqnarray}
&&\int_{\partial M}\bar{g}(x,\bar \nu)\hat{H}-(n-1)\,ds=0\label{boundaryM}\\
&&\int_{M}\bar{g}(x,\nu)HdA =\int_{\partial M}(-\cos\theta\bar{g}(x,\bar \nu )+\sin\theta)\,ds.\label{boundary2}
\end{eqnarray}
\end{prop}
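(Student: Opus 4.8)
The plan is to derive both identities from the divergence theorem applied to a tangential projection of the position field $x$, using two facts established earlier: that $x$ is a Killing field in $\hh^{n+1}$ (Proposition \ref{lem2.1}), and that on the horosphere $\mathcal H=\{x_{n+1}=1\}$ the conformal factor is $1$, so $(\mathcal H,\bar g)$ is isometric to flat $(\rr^{n},\delta)$. The first identity \eqref{boundaryM} is an intrinsic Minkowski formula for the \emph{closed} hypersurface $\partial M\subset\mathcal H$, while \eqref{boundary2} comes from integrating a divergence over $M$ and turning it into a boundary term on $\partial M$.

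For \eqref{boundaryM} I would first note that, since $x_{n+1}\equiv 1$ on $\mathcal H$, the second fundamental form of $\partial M$ in $\mathcal H$, its mean curvature $\hat H=\div_{\partial M}\bar\nu$, and the intrinsic divergence on $\partial M$ all agree with the corresponding Euclidean quantities in $\rr^{n}$. The restriction of $X_{n+1}=x-E_{n+1}$ to $\mathcal H$ is tangent to $\mathcal H$ and is precisely the Euclidean position vector of $\partial M$ in $\rr^{n}$; moreover $\bar g(x,\bar\nu)=\bar g(X_{n+1},\bar\nu)$ along $\partial M$ because $E_{n+1}=-\bar N$ is orthogonal to the tangential field $\bar\nu$. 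Decomposing $X_{n+1}$ into parts tangent and normal to $\partial M$ inside $\mathcal H$, I would compute
\begin{equation*}
\div_{\partial M}\!\big(X_{n+1}-\bar g(X_{n+1},\bar\nu)\,\bar\nu\big)=(n-1)-\bar g(X_{n+1},\bar\nu)\,\hat H,
\end{equation*}
using $\div_{\partial M}X_{n+1}=n-1$ and $\div_{\partial M}\bar\nu=\hat H$. Since $\partial M$ is a closed $(n-1)$-manifold, the integral of the left-hand side vanishes, which is exactly \eqref{boundaryM}.

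For \eqref{boundary2} I would instead work on $M$ with the tangential projection $x^{T}=x-\bar g(x,\nu)\nu$. Its intrinsic divergence is
\begin{equation*}
\div_M x^{T}=\textstyle\sum_i\bar g(\bar\nabla_{e_i}x,e_i)-\bar g(x,\nu)\,H=-\bar g(x,\nu)\,H,
\end{equation*}
where the first sum vanishes by the Killing identity \eqref{xKilling}, and the term $\bar g(x,\nu)H$ arises from $\div_M\!\big(\bar g(x,\nu)\nu\big)=\bar g(x,\nu)\div_M\nu=\bar g(x,\nu)H$ since $\nu$ is normal. The divergence theorem on $M$ then gives $\int_M\bar g(x,\nu)H\,dA=-\int_{\partial M}\bar g(x,\mu)\,ds$. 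Finally I would substitute $\mu=\sin\theta\,\bar N+\cos\theta\,\bar\nu$ from \eqref{mu0} and evaluate $\bar g(x,\bar N)$ on $\mathcal H$: since $\bar g(x,E_{n+1})=x_{n+1}^{-2}\langle x,E_{n+1}\rangle=x_{n+1}^{-2}\,x_{n+1}=V_{n+1}=1$ there and $\bar N=-E_{n+1}$, one has $\bar g(x,\bar N)=-1$, so $\bar g(x,\mu)=-\sin\theta+\cos\theta\,\bar g(x,\bar\nu)$, and \eqref{boundary2} follows.

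The computations are essentially routine divergence-theorem bookkeeping; the only delicate points are the correct use of \eqref{xKilling} to annihilate the trace of $\bar\nabla x$, the reduction of \eqref{boundaryM} to the flat Minkowski formula (which depends on the conformal factor being $1$ on $\mathcal H$), and keeping the orientation conventions \eqref{mu0}–\eqref{nu0} consistent so that the angle terms $\sin\theta$ and $\cos\theta$ enter with the right signs. I expect the evaluation $\bar g(x,\bar N)=-1$ together with the sign bookkeeping in the last step to be the most error-prone part, rather than conceptually hard.
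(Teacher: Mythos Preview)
Your proposal is correct and follows essentially the same approach as the paper. For \eqref{boundary2} the argument is identical: compute $\div_M x^T=-\bar g(x,\nu)H$ using the Killing property of $x$, apply the divergence theorem, and evaluate the boundary term via \eqref{mu0} and $\bar g(x,\bar N)=-1$. For \eqref{boundaryM} the paper simply invokes the classical Minkowski formula in $\rr^n$ after noting $\bar g(x,\bar\nu)|_{\mathcal H}=\langle\hat x,\bar\nu\rangle$, whereas you spell out the derivation via $\div_{\partial M}(X_{n+1}-\bar g(X_{n+1},\bar\nu)\bar\nu)$; this is just the standard proof of that formula and not a different route.
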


\begin{proof}
\eqref{boundaryM} is in fact the classical Minkowski formula in $\R^{n}$, since the horosphere $\mathcal H$ is isometric to $\R^{n}$ and  $\bar{g}(x,\bar \nu)|_{\mathcal H}=\<\hat{x}, \bar \nu\>$, where $\hat{x}=(x_1,\cdots , x_n, 0)$ is the position vector on $\mathcal{H}$ from the point $(0, \cdots, 0, 1)$. Therefore we only provide the proof of
\eqref{boundary2}, which uses a similar idea.


It is clear $$
\div_{M}x^{T} = \div_M (x-\bar{g}(x,\nu)\nu)=-\bar{g}(x,\nu)H.
$$
Integration by parts gives
\begin{eqnarray} \int_M -\bar{g}(x,\nu)H &=& \nonumber
\int_{M}\div_{M}x^{T} =\int_{\partial M}\bar{g}(x,\mu) \\
&=&\int_{\partial M}\bar{g}(x,\cos\theta\bar \nu +\sin\theta \bar{N})=\int_{\partial M}(\cos\theta\bar{g}(x,\bar \nu )-\sin\theta), \nonumber
\end{eqnarray}
where we have used $\bar{g}(x,\bar{N})=-1$ on $\partial M$ in the last equality.
 \end{proof}

Next we derive another crucial integral identity.
\begin{prop}\label{Proposition-x} Let $x: M\to\hh^{n+1}$ be an isometric immersion supported on $\mathcal{H}$. Assume $x(M)$ intersects $\mathcal{H}$ at a constant contact angle $\th \in (0, \pi)$. Then
	\begin{eqnarray} \label{boundary3}
	&&\int_{M}n\bar{g}(x,\nu)dA=\int_{\partial M}\bar{g}(x,\bar \nu )\,ds.
		\end{eqnarray}
\end{prop}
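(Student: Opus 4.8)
The plan is to exhibit $n\bar g(x,\nu)$ as the intrinsic divergence of a tangential vector field manufactured from the Killing field $x$, and then to apply the divergence theorem, evaluating the resulting boundary flux via the contact-angle relations \eqref{mu0}--\eqref{nubar}. Concretely, I would define $W\in\Gamma(TM)$ by $\bar g(W,Y)=\bar g(\bar\n_Y x,\nu)$ for every $Y\in TM$; equivalently, in a local orthonormal frame $\{e_i\}$ of $TM$, $W=\sum_i\bar g(\bar\n_{e_i}x,\nu)\,e_i$. Since $\mu$ is tangent to $M$, the divergence theorem will produce the boundary integrand $\bar g(W,\mu)=\bar g(\bar\n_\mu x,\nu)$, which I expect to equal $\bar g(x,\bar\nu)$ along $\partial M$.

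The heart of the argument is the identity $\div_M W=n\bar g(x,\nu)$. Computing at a point with a geodesic frame, $\div_M W=\sum_i\bar g(\bar\n_{e_i}\bar\n_{e_i}x,\nu)+\sum_i\bar g(\bar\n_{e_i}x,\bar\n_{e_i}\nu)$. For the second sum I would use the Weingarten relation $\bar\n_{e_i}\nu=\sum_k h(e_i,e_k)e_k$ together with the Killing property \eqref{xKilling}, which makes the matrix $\bar g(\bar\n_{e_i}x,e_k)$ antisymmetric; the contraction of the symmetric tensor $h$ with this antisymmetric matrix vanishes. For the first sum I would differentiate \eqref{a1} once more, using \eqref{a1} and \eqref{a3}, to get the ambient Hessian $\bar\n^2 x(Y,Y)=|Y|^2 x-\bar g(Y,x)Y$; pairing with $\nu$ gives $\bar g(\bar\n^2 x(e_i,e_i),\nu)=\bar g(x,\nu)$ (the cross term dies since $e_i\perp\nu$), while the frame-correction term contributes $-h(e_i,e_i)\bar g(\bar\n_\nu x,\nu)=0$, again by \eqref{xKilling}. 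Summing over $i$ yields exactly $n\bar g(x,\nu)$.

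It then remains to identify the boundary flux. On $\partial M\subset\mathcal{H}$ one has $x_{n+1}=1$, so $\bar E_{n+1}=E_{n+1}$ is $\bar g$-unit with $\bar N=-\bar E_{n+1}$, while $\bar\nu$ is tangent to $\mathcal{H}$; hence \eqref{mu0} and \eqref{nu0} give $\bar g(\mu,\bar E_{n+1})=-\sin\theta$ and $\bar g(\bar E_{n+1},\nu)=\cos\theta$. Substituting $Y=\mu$ into \eqref{a1} then yields $\bar g(\bar\n_\mu x,\nu)=\sin\theta\,\bar g(x,\nu)+\cos\theta\,\bar g(x,\mu)$, which is precisely $\bar g(x,\bar\nu)$ by \eqref{nubar}. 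The divergence theorem then gives \eqref{boundary3}.

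I expect the main obstacle to be the first step, namely extracting the factor $n$ cleanly. The subtlety is that $x$ is only Killing (conformal factor zero), so its own tangential divergence is $\div_M x^T=-\bar g(x,\nu)H$, carrying an $H$ and not an $n$; the factor $n$ must instead come from the trace of the ambient second derivative of $x$ (reflecting $\overline{\rm Ric}=-n\bar g$ in $\hh^{n+1}$), which is exactly why $W$ should be built from $\bar\n x$ contracted against $\nu$ rather than from $x^T$ directly. The two points requiring care are the two invocations of the Killing antisymmetry \eqref{xKilling} that annihilate the $h$-terms.
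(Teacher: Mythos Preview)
Your proposal is correct and is essentially the same as the paper's proof: by the Killing antisymmetry \eqref{xKilling}, your field $W$ satisfies $\bar g(W,Y)=\bar g(\bar\n_Y x,\nu)=-\bar g(\bar\n_\nu x,Y)$, so $W=-Z$ where $Z=\bar\n_\nu x=\bar g(x,\nu)\bar E_{n+1}-\bar g(\bar E_{n+1},\nu)x$ is precisely the tangential field the paper introduces. The only cosmetic difference is that the paper computes $\div_M Z$ by invoking the curvature tensor of $\hh^{n+1}$ directly (via $\bar R(\nu,e_\alpha)x$), whereas you reach the same conclusion by first deriving the explicit ambient Hessian $\bar\n^2 x(Y,Y)=|Y|^2 x-\bar g(Y,x)Y$ from \eqref{a1}; the boundary identification $\bar g(W,\mu)=\bar g(x,\bar\nu)$ is identical in both.
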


\begin{proof}
In order to prove \eqref{boundary3}, we consider the following vector $Z$ on $M$:
\begin{equation}\label{zn}
  Z=\bar{g}(x,\nu)  \bar{E}_{n+1}-\bar{g}(\bar{E}_{n+1},\nu)x.
\end{equation}
Recall that  $\bar{E}_{n+1}=x_{n+1}E_{n+1}$.
Along $\partial M$ we have
\begin{eqnarray}\label{zn2}
  \bar{g}(Z,\mu)&=&\bar{g}(x,\nu)\bar{g}(\bar{E}_{n+1},\mu)-\bar{g}(\bar{E}_{n+1},\nu)\bar{g}(x,\mu)\\
  &=&-\bar{g}(x,\nu)\bar{g}(\bar{N},\mu)+\bar{g}(\bar{N},\nu)\bar{g}(x,\mu)\nonumber\\
  &=&-\sin\theta\bar{g}(x,\nu)-\cos\theta\bar{g}(x,\mu)\nonumber\\
  &=&-\bar{g}(x,\bar \nu),\nonumber
\end{eqnarray}
where we have used \eqref{nubar} and the fact $\bar{N}=-\bar{E}_{n+1}$ on $\partial M$.
By integrating by parts we obtain
\begin{equation}\label{diver}
-\int_{\partial M}\bar{g}(x,\bar \nu )=\int_{\partial M}\bar{g}(Z,\mu)=\int_{M}\div_{M}(Z^{T}).
\end{equation}

Now we claim that
\begin{equation}\label{diver2}
  \div_{M}(Z^{T})=-n\bar{g}(x,\nu).
\end{equation}
Then Proposition  \ref{Proposition-x} follows from  claim \eqref{diver2}. Therefore we only need to show this claim.
 First we see that $Z$ is tangential, i.e, $\bar g (Z, \nu)=0$, which implies $ \div_{M}(Z^{T})=\div_{M}(Z)$. In view of \eqref{a1}, we know that
$$Z= \bar \n_\nu x.$$
Let $\{e_\a\}_{\a=1}^n$ be an othonormal frame of $M$. By using \eqref{xKilling} and the Riemannian curvature of $\hh^{n+1}$ being $-1$, we have
	\begin{eqnarray}\label{xmu34}
	\div_{M}(Z^{T})&=&\div_{M}(Z)
	\\&=&\bar{g}(\bar{\nabla}_{e_{\alpha}}(\bar{\nabla}_{\nu}x), e_{\alpha})\nonumber\\
	&=&\bar g(\bar \n_{\nu} (\bar \n_{e_\a} x), e_\a)-\bar g(\bar \n_{[\nu, e_\a]}x, e_\a)- \bar g(\bar R(\nu, e_\a)x, e_\a)\nonumber
	\\&=&\bar \n_{\nu}\bar g(\bar \n_{e_\a} x, e_\a)-\bar g(\bar \n_{e_\a} x,  \bar \n_{\nu} e_\a)-\bar g(\bar \n_{[\nu, e_\a]} x, e_\a)-n\bar g(x, \nu)\nonumber
	\\&=&-\bar g(\bar \n_{e_\a} x,  \bar \n_{e_\a}\nu+[\nu, e_\a])-\bar g(\bar \n_{[\nu, e_\a]} x, e_\a)- n\bar g(x, \nu)\nonumber
	\\&=&-h_{\alpha\beta}\bar g(\bar{\nabla}_{e_{\alpha}}x, e_{\beta})-\mathcal{L}_{x}\bar{g}([\nu,e_{\alpha}],e_{\alpha})-n\bar{g}(x,\nu)\nonumber
	\\&=&-n\bar g(x, \nu).\nonumber
	\end{eqnarray}
Thus we have  claim \eqref{diver2} and  finish the proof of the Proposition.
\end{proof}

As a consequence we get the following  identity.
\begin{corollary}Let $x: M\to\hh^{n+1}$ be a CMC hypersurface supported on $\mathcal{H}$. Assume $x(M)$ intersects $\mathcal{H}$ at a constant contact angle $\th \in (0, \pi)$. Then
  	\begin{eqnarray}
	&&\int_{\partial M}n\sin\theta-\bar{g}(x,\bar \nu )H-n\cos\theta \bar{g}(x,\bar \nu )\,ds=0.\label{integral-b}
	\end{eqnarray}
	\end{corollary}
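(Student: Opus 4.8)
The plan is to derive \eqref{integral-b} purely algebraically by combining the two Minkowski-type identities \eqref{boundary2} and \eqref{boundary3}, exploiting the single new hypothesis that distinguishes this corollary from Proposition \ref{Proposition-x}, namely that $x$ is CMC so that $H$ is a constant and may be pulled in and out of integrals over $M$. This is the crux: both earlier propositions hold for arbitrary immersions at constant contact angle, but only the constancy of $H$ lets us match up their bulk integrals $\int_M \bar g(x,\nu)\,dA$.

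Concretely, I would first rewrite \eqref{boundary2} by factoring out the constant $H$, obtaining
\begin{equation*}
H\int_{M}\bar{g}(x,\nu)\,dA =\int_{\partial M}\bigl(-\cos\theta\,\bar{g}(x,\bar \nu )+\sin\theta\bigr)\,ds,
\end{equation*}
and then multiply through by $n$. Independently, I would take \eqref{boundary3} and multiply both sides by the constant $H$, which gives
\begin{equation*}
nH\int_{M}\bar{g}(x,\nu)\,dA=H\int_{\partial M}\bar{g}(x,\bar \nu )\,ds.
\end{equation*}
The left-hand sides of these two displayed equations are now literally identical.

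Equating the right-hand sides yields
\begin{equation*}
H\int_{\partial M}\bar{g}(x,\bar \nu )\,ds = n\int_{\partial M}\bigl(-\cos\theta\,\bar{g}(x,\bar \nu )+\sin\theta\bigr)\,ds,
\end{equation*}
and transposing all terms to one side produces exactly
\begin{equation*}
\int_{\partial M}\bigl(n\sin\theta-\bar{g}(x,\bar \nu )H-n\cos\theta\,\bar{g}(x,\bar \nu )\bigr)\,ds=0,
\end{equation*}
which is \eqref{integral-b}. I would note that \eqref{boundaryM} is not needed for this particular identity.

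There is essentially no analytic obstacle here; the entire content is the bookkeeping observation that \eqref{boundary2} and \eqref{boundary3} describe the \emph{same} interior quantity $\int_M \bar g(x,\nu)\,dA$ once the CMC condition permits factoring the constant $H$, so their boundary expressions must agree. The only point requiring a moment of care is keeping the powers of $H$ and the factor $n$ matched correctly when combining the two formulas, and confirming that the sign conventions for $\bar\nu$ inherited from \eqref{nubar} are used consistently across both source identities so that the final integrand lands with the signs stated in \eqref{integral-b}.
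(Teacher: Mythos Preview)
Your proposal is correct and is exactly the argument the paper gives: the paper's proof reads in full ``Since $H$ is constant, we obtain \eqref{integral-b} from \eqref{boundary2} and \eqref{boundary3},'' and you have simply spelled out the algebra. Your side remark that \eqref{boundaryM} is not needed here is also accurate.
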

	\begin{proof}Since $H$ is constant, we obtain \eqref{integral-b} from \eqref{boundary2} and \eqref{boundary3}.
	\end{proof}

Next we  can use the conformal Killing vector field $X_{n+1}$ to prove a new Minkowski type formula, which is very powerful for the study of  hypersurfaces with boundary
intersecting $\mathcal{H}$ at a constant angle in $\hh^{n+1}$.
\begin{prop}\label{Minkhorol}  Let $x: M\to  \hh^{n+1}$ be an isometric immersion supported on $\mathcal{H}$. Assume  $x(M)$ intersects $\mathcal{H}$ at a constant contact angle $\th \in (0, \pi)$. Then
\begin{eqnarray}\label{Mink1}
\int_M nV_{n+1}-\bar{g}(X_{n+1},\nu)H-n\cos \theta \bar{g}(x,\nu)\,dA=0.
\end{eqnarray}
\end{prop}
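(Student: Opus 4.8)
The plan is to read \eqref{Mink1} as a Minkowski-type identity produced by integrating the divergence of the tangential projection of the conformal Killing field $X_{n+1}$. First I would set $X_{n+1}^T = X_{n+1}-\bar g(X_{n+1},\nu)\nu$ and compute its divergence on $M$. Writing $\{e_\alpha\}$ for a local orthonormal frame of $M$,
\begin{equation*}
\div_M(X_{n+1}^T)=\sum_\alpha \bar g(\bar\n_{e_\alpha}X_{n+1},e_\alpha)-\bar g(X_{n+1},\nu)\sum_\alpha \bar g(\bar\n_{e_\alpha}\nu,e_\alpha),
\end{equation*}
the cross terms vanishing because $\bar g(\nu,e_\alpha)=0$. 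The second sum is just $H$. For the first sum I would invoke the conformal Killing property \eqref{XXaeq1}: tracing $\tfrac12(\bar\n_i(X_{n+1})_j+\bar\n_j(X_{n+1})_i)=V_{n+1}\bar g_{ij}$ over the tangent frame gives $\sum_\alpha \bar g(\bar\n_{e_\alpha}X_{n+1},e_\alpha)=nV_{n+1}$. Hence
\begin{equation*}
\div_M(X_{n+1}^T)=nV_{n+1}-\bar g(X_{n+1},\nu)H,
\end{equation*}
which is precisely the interior integrand of \eqref{Mink1} prior to the $\cos\theta$ term.

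By the divergence theorem on $M$, $\int_M\div_M(X_{n+1}^T)\,dA=\int_{\partial M}\bar g(X_{n+1}^T,\mu)\,ds=\int_{\partial M}\bar g(X_{n+1},\mu)\,ds$, since $\mu\perp\nu$. The crux is to identify this boundary term. Along $\partial M$ I would use the angle decomposition \eqref{mu0}, $\mu=\sin\theta\,\bar N+\cos\theta\,\bar\nu$, together with the tangency \eqref{XXaeq2} of $X_{n+1}$ to $\mathcal H$, i.e. $\bar g(X_{n+1},\bar N)=0$, to remove the $\sin\theta$ contribution and obtain $\bar g(X_{n+1},\mu)=\cos\theta\,\bar g(X_{n+1},\bar\nu)$. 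Since $E_{n+1}=-\bar N$ is normal to $\mathcal H$ while $\bar\nu\in T\mathcal H$, one has $\bar g(E_{n+1},\bar\nu)=0$, so $\bar g(X_{n+1},\bar\nu)=\bar g(x,\bar\nu)$; thus $\bar g(X_{n+1},\mu)=\cos\theta\,\bar g(x,\bar\nu)$ on $\partial M$.

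The last step converts this boundary integral back into a volume integral using Proposition \ref{Proposition-x}, namely $\int_{\partial M}\bar g(x,\bar\nu)\,ds=\int_M n\,\bar g(x,\nu)\,dA$. Combining the interior divergence identity, the boundary evaluation and this substitution yields $\int_M\big(nV_{n+1}-\bar g(X_{n+1},\nu)H\big)\,dA=\cos\theta\int_{\partial M}\bar g(x,\bar\nu)\,ds=\int_M n\cos\theta\,\bar g(x,\nu)\,dA$, which is exactly \eqref{Mink1}. I do not expect a serious obstacle: the computation is routine once the conformal Killing divergence formula is in hand. The one place demanding care is the boundary evaluation, where the precise cancellation using both \eqref{mu0} and the tangency \eqref{XXaeq2} is needed to produce exactly $\cos\theta\,\bar g(x,\bar\nu)$, the form to which Proposition \ref{Proposition-x} applies; getting this factor right is what makes the $\cos\theta$ term in \eqref{Mink1} come out correctly.
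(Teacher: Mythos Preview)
Your proposal is correct and follows essentially the same argument as the paper: compute $\div_M(X_{n+1}^T)=nV_{n+1}-\bar g(X_{n+1},\nu)H$ from the conformal Killing property \eqref{XXaeq1}, integrate by parts, evaluate the boundary term via \eqref{mu0}, \eqref{XXaeq2} and $\bar g(E_{n+1},\bar\nu)=0$ to obtain $\cos\theta\int_{\partial M}\bar g(x,\bar\nu)$, and then apply Proposition \ref{Proposition-x}. There is nothing to add.
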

\begin{proof}
First, from \eqref{XXaeq1}, we have
\begin{equation}\label{LHS-mink}
  \div_{M}(X_{n+1}^{T})=\div_{M}(X_{n+1}-\bar{g}(X_{n+1},\nu)\nu)=nV_{n+1}-\bar{g}(X_{n+1},\nu)H.
\end{equation}
Then by integration by parts, we get
\begin{equation}\label{boundary}
  \int_{M}\div_{M}(X_{n+1}^{T})=\int_{\partial M}\bar{g}(X_{n+1},\mu)=\cos\theta\int_{\partial M}\bar{g}(X_{n+1},\bar \nu )=\cos\theta\int_{\partial M}\bar{g}(x,\bar \nu ).
\end{equation}
Here we have also used \eqref{mu0}, \eqref{XXaeq2} and $\bar{g}(E_{n+1},\bar \nu )=0$ along $\partial M$.
 Combining \eqref{LHS-mink}-\eqref{boundary} and \eqref{boundary3}, the proof is completed.
\end{proof}

Recall $J=\Delta+(|h|^2-n)$, the Jocobi operator. We next derive differential equations for geometric  quantities
$\bar g (x, \nu), $ $\bar g(E_{n+1}, \nu)$ and $ \bar g(X_{n+1}, \nu)$.
\begin{prop}\label{prop4.66} Let $x: M\to\hh^{n+1}$ be a CMC hypersurface.  Then the following identities holds:
	\begin{eqnarray}
	J \bar g(x, \nu)&=&0,\label{xmu}\\
	J\bar g(E_{n+1}, \nu)&=&-HV_{n+1}-n\bar g(E_{n+1},\nu),\label{Ennu}\\
	J\bar g(X_{n+1}, \nu)&=&HV_{n+1}-n\bar g(\bar \n V_{n+1},\nu) = HV_{n+1}+n\bar g(E_{n+1},\nu).\label{Xmu1}
	\end{eqnarray}
\end{prop}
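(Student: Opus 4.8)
The three formulas are Jacobi-type identities for the support functions $\bar g(x,\nu)$, $\bar g(E_{n+1},\nu)$ and $\bar g(X_{n+1},\nu)$, and since $X_{n+1}=x-E_{n+1}$ and $J$ is linear, \eqref{Xmu1} will follow at once from \eqref{xmu} and \eqref{Ennu}. So the plan is to establish \eqref{xmu} and \eqref{Ennu} by computing $\Delta$ of the two support functions directly from the structure equations \eqref{a1}--\eqref{a3}; these already encode the curvature of $\hh^{n+1}$, so no separate Simons/Ricci computation is needed, and no boundary condition is used (the identities are pointwise). I fix $p\in M$ and an orthonormal frame $\{e_\alpha\}$ geodesic at $p$, so that at $p$ one has $\bar\n_{e_\alpha}\nu=h_{\alpha\beta}e_\beta$ and $\bar\n_{e_\alpha}e_\beta=-h_{\alpha\beta}\nu$ (the sign coming from the convention $h(X,Y)=\bar g(\bar\n_X\nu,Y)$), and I repeatedly use the Codazzi equation together with $H=\mathrm{const}$ in the form $\sum_\alpha\n_\alpha h_{\alpha\beta}=\n_\beta H=0$.

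I would begin with \eqref{Ennu}, which is the shorter computation. By \eqref{a3}, $\bar\n_{e_\alpha}E_{n+1}=-V_{n+1}e_\alpha$ is tangential, so $\bar g(\bar\n_{e_\alpha}E_{n+1},\nu)=0$ and $\n_\alpha\bar g(E_{n+1},\nu)=h_{\alpha\beta}\bar g(E_{n+1},e_\beta)$. Differentiating once more and tracing, the term $\sum_\alpha\n_\alpha h_{\alpha\beta}$ drops out, while $\n_\alpha\bar g(E_{n+1},e_\beta)=-V_{n+1}\delta_{\alpha\beta}-h_{\alpha\beta}\bar g(E_{n+1},\nu)$ (again from \eqref{a3}); contracting against $h_{\alpha\beta}$ gives $\Delta\bar g(E_{n+1},\nu)=-HV_{n+1}-|h|^2\bar g(E_{n+1},\nu)$. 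Adding $(|h|^2-n)\bar g(E_{n+1},\nu)$ then yields \eqref{Ennu}. The conformal (pure-trace) nature of $\bar\n E_{n+1}$ from Proposition \ref{lem2.1}(ii) is exactly what produces the inhomogeneous term $-HV_{n+1}$.

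For \eqref{xmu} the same scheme applies but with more bookkeeping. Expanding $\bar\n_{e_\alpha}x$ by \eqref{a1} gives $\n_\alpha\bar g(x,\nu)=-p_\alpha\bar g(x,\nu)+a\,x^T_\alpha+h_{\alpha\beta}x^T_\beta$, where I write $p_\alpha=\bar g(\bar E_{n+1},e_\alpha)$, $a=\bar g(\bar E_{n+1},\nu)$ and $x^T_\alpha=\bar g(x,e_\alpha)$. I would then record the auxiliary derivatives $\n_\alpha x^T_\beta$, $\n_\alpha a$ and $\n_\alpha p_\beta$ (routine from \eqref{a1} and from $\bar\n_{e_\alpha}\bar E_{n+1}=p_\alpha\bar E_{n+1}-e_\alpha$, the latter a consequence of \eqref{a3}), differentiate $\n_\alpha\bar g(x,\nu)$ once more, and trace. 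In $\n_\alpha p_\beta$ the Kronecker term $-\delta_{\alpha\beta}$ produces $+n\bar g(x,\nu)$, the quadratic-in-$h$ term produces $-|h|^2\bar g(x,\nu)$, and every remaining term cancels against a partner. The outcome is $\Delta\bar g(x,\nu)=-(|h|^2-n)\bar g(x,\nu)$, i.e. $J\bar g(x,\nu)=0$, consistent with the general principle that support functions of ambient Killing fields (here $x$, by Proposition \ref{lem2.1}(i)) are Jacobi fields.

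Finally, \eqref{Xmu1} follows by writing $J\bar g(X_{n+1},\nu)=J\bar g(x,\nu)-J\bar g(E_{n+1},\nu)=HV_{n+1}+n\bar g(E_{n+1},\nu)$; the equivalence with the first stated form is just $\bar\n V_{n+1}=-E_{n+1}$, which is immediate from $V_{n+1}=x_{n+1}^{-1}$ and $Y(x_{n+1})=x_{n+1}\bar g(Y,\bar E_{n+1})$ (or directly from \eqref{a3}). The main obstacle is the cancellation bookkeeping in \eqref{xmu}: the cross terms coupling $p_\alpha,\,a,\,x^T_\alpha$ and $h_{\alpha\beta}$ must vanish, and they do so precisely because of the symmetry $h_{\alpha\beta}=h_{\beta\alpha}$ and the antisymmetry of $\bar g(\bar\n_i x,E_j)$ coming from the Killing property \eqref{xKilling}. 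Verifying these pairings carefully is the only delicate point; the $E_{n+1}$ computation avoids it because $\bar\n E_{n+1}$ is pure trace.
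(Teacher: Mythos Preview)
Your plan is correct. For \eqref{Ennu} and \eqref{Xmu1} it coincides with the paper's argument essentially verbatim: the paper also computes $\Delta\bar g(E_{n+1},\nu)$ directly from \eqref{a3}, obtaining $-HV_{n+1}-|h|^2\bar g(E_{n+1},\nu)$, and then gets \eqref{Xmu1} by linearity together with $\bar\nabla V_{n+1}=-E_{n+1}$.

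For \eqref{xmu} your route differs from the paper's. You propose to expand $\nabla_\alpha\bar g(x,\nu)$ explicitly via \eqref{a1}, track the auxiliary quantities $p_\alpha=\bar g(\bar E_{n+1},e_\alpha)$, $a=\bar g(\bar E_{n+1},\nu)$, $x^T_\alpha$, differentiate once more, and verify by hand that all mixed terms cancel, leaving $-(|h|^2-n)\bar g(x,\nu)$. This works, and your remark that \eqref{a1}--\eqref{a3} already encode the curvature of $\hh^{n+1}$ is exactly why: the Kronecker term $-\delta_{\alpha\beta}$ in $\nabla_\alpha p_\beta=p_\alpha p_\beta-\delta_{\alpha\beta}-h_{\alpha\beta}a$ is the incarnation of the sectional curvature $-1$, and contributes the $+n\bar g(x,\nu)$ you identified. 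The paper instead uses the Killing antisymmetry \eqref{xKilling} at the level of first derivatives to write $e_\alpha\bar g(x,\nu)=\bar g(x,\bar\nabla_{e_\alpha}\nu)-\bar g(\bar\nabla_\nu x,e_\alpha)$, and then handles the second derivative via the commutator identity $\bar\nabla_{e_\alpha}\bar\nabla_\nu x=\bar\nabla_\nu\bar\nabla_{e_\alpha}x+\bar\nabla_{[e_\alpha,\nu]}x+\bar R(e_\alpha,\nu)x$, invoking the curvature tensor of $\hh^{n+1}$ directly; this reduces to the earlier computation \eqref{xmu34} (i.e.\ $\bar g(\bar\nabla_{e_\alpha}\bar\nabla_\nu x,e_\alpha)=-n\bar g(x,\nu)$) already done in the proof of Proposition~\ref{Proposition-x}. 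The paper's approach is more conceptual and shorter because it recycles that computation and isolates the Killing and curvature contributions cleanly; your approach is more elementary (no curvature tensor appears) at the cost of the cross-term bookkeeping you flagged.
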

\begin{proof}
	It is clear that  \eqref{Xmu1}  follows from \eqref{xmu} and \eqref{Ennu}. Therefore, we only need to show \eqref{xmu} and \eqref{Ennu}.
	In fact, \eqref{xmu} is a known fact, since $x$ is a Killing vector field.
	For convenience of the reader we give a direct computation.
	For a fixed $p\in M$ let $\{e_\a\}_{\a=1}^{n}$ at $p$ be the local orthonormal basis as above.
By \eqref{xKilling}, we  calculate at $p$,
	\begin{eqnarray}
	e_{\alpha}\bar{g}(x,\nu)=\bar{g}(x,\bar{\nabla}_{e_{\alpha}}\nu)+\bar{g}(\bar{\nabla}_{e_{\alpha}}x,\nu)=\bar{g}(x,\bar{\nabla}_{e_{\alpha}}\nu)-\bar{g}(\bar{\nabla}_{\nu}x, e_{\alpha}).
	\end{eqnarray}
	It follows that
	\begin{eqnarray}\label{xmu33}
	&&\Delta\bar{g}(x,\nu)=e_{\alpha}e_{\alpha}\bar{g}(x,\nu)\\
	&=&\bar{g}(\bar{\nabla}_{e_{\alpha}}x,\bar{\nabla}_{e_{\alpha}}\nu)+g(x,\bar{\nabla}_{e_{\alpha}}(\bar{\nabla}_{e_{\alpha}}\nu))-\bar{g}(\bar{\nabla}_{e_{\alpha}}(\bar{\nabla}_{\nu}x), e_{\alpha})-\bar{g}(\bar{\nabla}_{\nu}x, \bar{\nabla}_{e_{\alpha}}e_{\alpha})\nonumber\\
	&=&h_{\alpha\beta}\bar{g}(\bar{\nabla}_{e_{\alpha}}x, e_{\beta})+\bar{g}(x,\nabla H-|h|^{2}\nu)-\bar{g}(\bar{\nabla}_{e_{\alpha}}(\bar{\nabla}_{\nu}x), e_{\alpha})+H\bar{g}(\bar{\nabla}_{\nu}x, \nu)\nonumber\\
	&=&-|h|^{2}\bar{g}(x,\nu)-\bar{g}(\bar{\nabla}_{e_{\alpha}}(\bar{\nabla}_{\nu}x), e_{\alpha}),\nonumber
	\end{eqnarray}
	where we have used \eqref{xKilling}  and  the fact $H$ is constant.
		In \eqref{xmu34}, we have proved that
	\begin{eqnarray}\label{xmu34'}
	&&-\bar{g}(\bar{\nabla}_{e_{\alpha}}(\bar{\nabla}_{\nu}x), e_{\alpha})=n\bar g(x, \nu).
	\end{eqnarray}
\eqref{xmu}	follows from  \eqref{xmu33} and \eqref{xmu34'}.
	
	Using \eqref{a3} and \eqref{En-Killing}, we can check that
	\begin{eqnarray}\label{xmu35}
	\Delta\bar{g}(E_{n+1},\nu)&=&e_{\alpha}e_{\alpha}\bar{g}(E_{n+1},\nu)
	=e_{\alpha}(\bar{g}(\bar{\nabla}_{e_{\alpha}}E_{n+1},\nu)+\bar{g}(E_{n+1},\bar{\nabla}_{e_{\alpha}}\nu))\nonumber\\
	&=&e_{\alpha}\bar{g}(E_{n+1},\bar{\nabla}_{e_{\alpha}}\nu)
	=\bar{g}(\bar{\nabla}_{e_{\alpha}}E_{n+1}, \bar{\nabla}_{e_{\alpha}}\nu)+\bar{g}(E_{n+1}, \bar{\nabla}_{e_{\alpha}}(\bar{\nabla}_{e_{\alpha}}\nu))\nonumber\\
	&=&h_{\alpha\beta}\bar{g}(\bar{\nabla}_{e_{\alpha}}E_{n+1}, e_{\alpha})+\bar{g}(E_{n+1}, \nabla H-|h|^{2}\nu)\nonumber\\
	&=&-H V_{n+1}-|h|^{2}\bar{g}(E_{n+1}, \nu),\nonumber
	\end{eqnarray}
	which implies \eqref{Ennu}.
\end{proof}
\

 Now we check the boundary equations of corresponding geometric quantities.
\begin{prop}\label{prop-boundary} Let $x: M\to  \hh^{n+1}$ be an isometric immersion supported on $\mathcal{H}$. Assume  $x(M)$ intersects $\mathcal{H}$ at a constant contact angle $\th \in (0, \pi)$. Then along $\p M$, we have
\begin{eqnarray}\label{Hyp-dddd1}
{\nabla}_{\mu}(V_{n+1}-\cos\theta\bar{g}(E_{n+1},\nu))&=&q(V_{n+1}-\cos\theta\bar{g}(E_{n+1},\nu))\label{boundary-111}\\
{\nabla}_{\mu}\bar{g}(X_{n+1},\nu)&=&q\bar{g}(X_{n+1},\nu)\label{boundary-222}\\
{\nabla}_{\mu}\bar{g}(x,\nu)&=&\bar{g}(x, \bar \nu )+h(\mu, \mu)\bar{g}(x, \mu),\label{Hyp-dddd22}
\end{eqnarray}
where $q$ is defined in \eqref{q'}.
\end{prop}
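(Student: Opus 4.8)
The plan is to prove all three boundary identities by differentiating each scalar directly in the direction $\mu$ along $\partial M$, feeding in the three ambient connection formulas \eqref{a1}--\eqref{a3}, the normal-frame relations \eqref{mu0}--\eqref{nubar}, and the values that hold on the horosphere $\mathcal{H}$: there $x_{n+1}=1$, so $V_{n+1}=1$ and $\bar E_{n+1}=E_{n+1}=-\bar N$, while $\bar g(x,\bar N)=-1$ and, from \eqref{nu0}, $\bar g(E_{n+1},\nu)=\cos\theta$. The one genuinely geometric ingredient is Proposition \ref{Proposition1}: since $\mathcal{H}$ is umbilical, $\mu$ is a principal direction, so $h(\mu,e)=0$ for $e\in T(\partial M)$ and hence $\bar\n_\mu\nu=h(\mu,\mu)\mu$. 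This is what collapses the otherwise unwieldy tangential derivative of $\nu$ to a single term.

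First I would prove \eqref{Hyp-dddd22}. Expanding $\nabla_\mu\bar g(x,\nu)=\bar g(\bar\n_\mu x,\nu)+\bar g(x,\bar\n_\mu\nu)$, I substitute \eqref{a1} for $\bar\n_\mu x$ and evaluate the inner products with $\nu$ using $\bar E_{n+1}=-\bar N$ and \eqref{nu0}; the first term becomes $\sin\theta\,\bar g(x,\nu)+\cos\theta\,\bar g(x,\mu)$, which is exactly $\bar g(x,\bar\nu)$ by \eqref{nubar}. The second term is $h(\mu,\mu)\bar g(x,\mu)$ by the principal-direction reduction of $\bar\n_\mu\nu$. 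Summing gives \eqref{Hyp-dddd22}.

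Next I would record the two auxiliary derivatives. For $V_{n+1}$, decomposing $\mu=\sin\theta\,\bar N+\cos\theta\,\bar\nu$ gives $\nabla_\mu V_{n+1}=\sin\theta\,\partial_{\bar N}V_{n+1}+\cos\theta\,\partial_{\bar\nu}V_{n+1}$; here $\partial_{\bar N}V_{n+1}=V_{n+1}=1$ by Proposition \ref{xaa2} and $\partial_{\bar\nu}V_{n+1}=0$ since $\bar\nu$ is tangent to $\mathcal{H}$, on which $V_{n+1}\equiv 1$, so $\nabla_\mu V_{n+1}=\sin\theta$. For $\bar g(E_{n+1},\nu)$, I differentiate and use \eqref{a3}: the term $\bar g(\bar\n_\mu E_{n+1},\nu)=-V_{n+1}\,\bar g(\mu,\nu)$ vanishes, and the remaining $h(\mu,\mu)\bar g(E_{n+1},\mu)=-\sin\theta\,h(\mu,\mu)$ by the principal-direction reduction together with $E_{n+1}=-\bar N$. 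Identity \eqref{boundary-111} then follows by substitution: the combination $V_{n+1}-\cos\theta\,\bar g(E_{n+1},\nu)$ has boundary value $\sin^2\theta$, and with $q=\csc\theta+\cot\theta\,h(\mu,\mu)$ both sides reduce to $\sin\theta+\sin\theta\cos\theta\,h(\mu,\mu)$.

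Finally, for \eqref{boundary-222} I would split $\bar g(X_{n+1},\nu)=\bar g(x,\nu)-\bar g(E_{n+1},\nu)$ and combine \eqref{Hyp-dddd22} with the derivative of $\bar g(E_{n+1},\nu)$ just found. To verify that the result equals $q\,\bar g(X_{n+1},\nu)$, the cleanest bookkeeping is to rewrite $\bar g(x,\mu)$ and $\bar g(x,\nu)$ through the single scalar $a:=\bar g(x,\bar\nu)$ using \eqref{mu0}, \eqref{nu0} and $\bar g(x,\bar N)=-1$; both sides then collapse to $a\,(1+\cos\theta\,h(\mu,\mu))$. I do not expect a genuine obstacle: the whole proposition is a string of first-order derivative computations. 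The only thing demanding care is consistent sign bookkeeping among the four normal fields $\nu,\mu,\bar N,\bar\nu$ and the repeated substitution $\bar E_{n+1}=-\bar N$, $V_{n+1}=1$ on $\mathcal{H}$; all the conceptual content sits in Proposition \ref{Proposition1}, which forces $\bar\n_\mu\nu$ to be proportional to $\mu$.
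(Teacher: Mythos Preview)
Your proposal is correct and, for \eqref{Hyp-dddd22} and \eqref{boundary-111}, essentially identical to the paper's argument: direct differentiation using \eqref{a1}--\eqref{a3}, the principal-direction reduction $\bar\n_\mu\nu=h(\mu,\mu)\mu$ from Proposition~\ref{Proposition1}, and the boundary values on $\mathcal{H}$.

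For \eqref{boundary-222} there is a small but genuine difference in route. The paper exploits the tangentiality of $X_{n+1}$ to $\mathcal{H}$ (Proposition~\ref{xaa} (ii)) to record the relation $\bar g(X_{n+1},\mu)=\cot\theta\,\bar g(X_{n+1},\nu)$ and then differentiates $\bar g(X_{n+1},\nu)$ as a whole, so the factor $q$ drops out in one step. You instead decompose $X_{n+1}=x-E_{n+1}$ and recombine the already-computed derivatives, then verify both sides equal $a(1+\cos\theta\,h(\mu,\mu))$ after rewriting everything in terms of $a=\bar g(x,\bar\nu)$. Your way recycles work already done and avoids invoking \eqref{XXaeq2}; the paper's way is slightly more conceptual and makes clearer why $q$ appears (it is the natural Robin coefficient for any quantity that, restricted to $\partial M$, has zero $\bar N$-component). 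Both arguments are entirely elementary and of comparable length.
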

\begin{proof}
In this proof we always take value along $\partial M$ and use \eqref{mu0} and \eqref{nu0}.

By directly calculating, we have
\begin{eqnarray}\label{X-boun}
&&\,\,q(V_{n+1}-\cos\theta\bar{g}(E_{n+1},\nu))=\left({\csc\theta}+\cot\theta h(\mu,\mu)\right)(1-\cos^{2}\theta)=\sin\theta(1+\cos\theta h(\mu, \mu)),
\end{eqnarray}
From \eqref{a3} and Proposition \ref{Proposition1}, we have
\begin{eqnarray}\label{X-boun2}
\quad\quad\nabla_{\mu}V_{n+1}-\cos\theta\nabla_{\mu}\bar{g}(E_{n+1},\nu)&=&-\bar{g}(E_{n+1},\mu)-\cos\theta(\bar{g}(\bar{\nabla}_{\mu}E_{n+1}, \nu)+\bar{g}(E_{n+1},\bar{\nabla}_{\mu}\nu))\\
&=&-\bar{g}(E_{n+1},\mu)(1+\cos\theta h(\mu, \mu))\nonumber\\
&=&\sin\theta(1+\cos\theta h(\mu, \mu)),\nonumber
\end{eqnarray}
where we use $\bar{g}(E_{n+1},\mu)=-\sin\theta$ on $\partial M$.
 It is clear that equation \eqref{boundary-111} follows from \eqref{X-boun} and \eqref{X-boun2}.

Using the fact that \eqref{Nbar} and \eqref{XXaeq2}, we have
\begin{equation}\label{relation}
  \bar{g}(X_{n+1},\mu)=\cot\theta \bar{g}(X_{n+1},\nu).
\end{equation}

From \eqref{relation}, \eqref{YZ} and Proposition \ref{Proposition1}, we get
\begin{eqnarray*}
&&{\nabla}_{\mu}\bar{g}(X_{n+1},\nu) \\
&=& \bar{g}(\bar{\nabla}_{\mu}X_{n+1},\nu)+\bar{g}(X_{n+1},\bar{\nabla}_{\mu}\nu)\\
&=&\bar{g}(\bar{\nabla}_{\mu}X_{n+1},\nu)+\bar{g}(X_{n+1},\mu)h(\mu,\mu)\\
&=&-\bar{g}(\bar{E}_{n+1},\mu)\bar{g}(X_{n+1},\nu)+\bar{g}(X_{n+1},\mu)\bar{g}(\bar{E}_{n+1},\nu)+\bar{g}(X_{n+1},\mu)h(\mu,\mu)\\
&=&\sin\theta \bar{g}(X_{n+1},\nu)+\cos\theta \bar{g}(X_{n+1},\mu)+\bar{g}(X_{n+1},\mu)h(\mu,\mu)\\
&=&\sin\theta \bar{g}(X_{n+1},\nu)+\cos\theta\cot\theta \bar{g}(X_{n+1},\nu)+\cot\theta\bar{g}(X_{n+1},\nu)h(\mu,\mu)\\
&=&q\bar{g}(X_{n+1},\nu)
\end{eqnarray*}
and
\begin{eqnarray}
{\nabla}_{\mu}\bar{g}(x,\nu)&=&\bar{g}(\bar{\nabla}_{\mu}x,\nu)+\bar{g}(x,\bar{\nabla}_{\mu}\nu)\nonumber\\
&=&\bar{g}(\bar{\nabla}_{\mu}x,\nu)+h(\mu,\mu)\bar{g}(x,\mu)\nonumber\\
&=&-\bar{g}(\bar{E}_{n+1},\mu)\bar{g}(x,\nu)+\bar{g}(\bar{E}_{n+1},\nu)\bar{g}(x,\mu)+h(\mu,\mu)\bar{g}(x,\mu)\nonumber\\
&=&\sin\theta\bar{g}(x,\nu)+\cos\theta\bar{g}(x,\mu)+h(\mu,\mu)\bar{g}(x,\mu)\nonumber\\
&=&\bar{g}(x,\bar \nu)+h(\mu,\mu)\bar{g}(x,\mu),\nonumber
\end{eqnarray}
where we have used  $\bar \nu=\sin\theta\,\nu+\cos\theta\,\mu$ along $\partial M$
in the last equality.

\end{proof}

\

\section{Uniqueness for stable capillary hypersurfaces 
}\label{sec4}
Now we start to show the uniqueness result for stable capillary hypersurfaces supported on $\mathcal H$. First by the new Minkowski type formula \eqref{Mink1},
 we have an admissible test function defined by
 \begin{eqnarray}\label{test-function}
 \varphi_{n+1}:=nV_{n+1}-\bar{g}(X_{n+1},\nu)H-n\cos \theta \bar{g}(x,\nu).
 \end{eqnarray}
\begin{prop}\label{prop-3.1}Let $x: M\to\hh^{n+1}$ be a CMC hypersurface with boundary supported on $\mathcal{H}$. Assume $x(M)$ intersects $\mathcal{H}$ at a constant contact angle $\th \in (0, \pi)$.
Then $\varphi_{n+1}$ satisfies
	\begin{eqnarray}J\varphi_{n+1} &=&(n|h|^2-H^2)V_{n+1},\label
	{varphi1}  \\
	\label{Hyp-bdy1}
	{\nabla}_{\mu}\varphi_{n+1}&=&q\varphi_{n+1},\\
\int_{M}\varphi_{n+1}\,dA&=&0,\label{bdy-zero1}\\
\int_{\partial M}\varphi_{n+1}\,ds&=&0.\label{bdy-zero2}
\end{eqnarray}

\end{prop}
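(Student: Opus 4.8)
The statement bundles four claims about $\varphi_{n+1}=nV_{n+1}-\bar g(X_{n+1},\nu)H-n\cos\theta\,\bar g(x,\nu)$, of which two are essentially immediate from identities already proved and two require a short computation. The plan is to dispatch \eqref{bdy-zero1} first, then prove the Jacobi equation \eqref{varphi1} (the real heart of the matter), and finally derive the two boundary statements \eqref{Hyp-bdy1} and \eqref{bdy-zero2} by regrouping $\varphi_{n+1}$. Indeed \eqref{bdy-zero1} is nothing but the Minkowski formula \eqref{Mink1} of Proposition \ref{Minkhorol}: by the very definition of $\varphi_{n+1}$, \eqref{Mink1} asserts precisely that $\int_M\varphi_{n+1}\,dA=0$, so nothing further is needed.

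For \eqref{varphi1} I would use linearity of $J=\Delta+(|h|^2-n)$ and the constancy of $H$ to reduce to computing $JV_{n+1}$, $J\bar g(X_{n+1},\nu)$ and $J\bar g(x,\nu)$. The last two are given by Proposition \ref{prop4.66}: $J\bar g(x,\nu)=0$ by \eqref{xmu} and $J\bar g(X_{n+1},\nu)=HV_{n+1}+n\bar g(E_{n+1},\nu)$ by \eqref{Xmu1}. The missing ingredient is $JV_{n+1}$, which I would obtain from the standard relation between the intrinsic Laplacian and the ambient Hessian,
\[
\Delta V_{n+1}=\bar\Delta V_{n+1}-\bar\n^2 V_{n+1}(\nu,\nu)-H\bar g(\bar\n V_{n+1},\nu).
\]
Plugging in $\bar\n^2 V_{n+1}=V_{n+1}\bar g$ from \eqref{va2} (hence $\bar\Delta V_{n+1}=(n+1)V_{n+1}$ and $\bar\n^2 V_{n+1}(\nu,\nu)=V_{n+1}$) together with $\bar\n V_{n+1}=-E_{n+1}$ (a direct computation, consistent with \eqref{a3} and \eqref{va2}), gives $\Delta V_{n+1}=nV_{n+1}+H\bar g(E_{n+1},\nu)$, so that $JV_{n+1}=|h|^2V_{n+1}+H\bar g(E_{n+1},\nu)$. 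Assembling the three pieces,
\[
J\varphi_{n+1}=n\bigl(|h|^2V_{n+1}+H\bar g(E_{n+1},\nu)\bigr)-H\bigl(HV_{n+1}+n\bar g(E_{n+1},\nu)\bigr),
\]
and the two $\bar g(E_{n+1},\nu)$ terms cancel, leaving $(n|h|^2-H^2)V_{n+1}$. This cancellation is exactly what the coefficients of $\varphi_{n+1}$ are engineered to achieve, and confirming it is the main (though brief) obstacle; everything else is bookkeeping.

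For the two boundary claims the key device is to rewrite $\varphi_{n+1}$ using $X_{n+1}=x-E_{n+1}$, i.e.\ $\bar g(x,\nu)=\bar g(X_{n+1},\nu)+\bar g(E_{n+1},\nu)$, into the grouped form
\[
\varphi_{n+1}=n\bigl(V_{n+1}-\cos\theta\,\bar g(E_{n+1},\nu)\bigr)-(H+n\cos\theta)\,\bar g(X_{n+1},\nu).
\]
Since $H+n\cos\theta$ is constant, \eqref{Hyp-bdy1} follows immediately from the two Robin-type boundary identities \eqref{boundary-111} and \eqref{boundary-222} of Proposition \ref{prop-boundary}, both of which carry the same factor $q$. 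For \eqref{bdy-zero2} I would evaluate the grouped expression along $\partial M\subset\mathcal H$, where $V_{n+1}=1$, where $\bar g(E_{n+1},\nu)=\cos\theta$ and $\bar g(X_{n+1},\nu)=\sin\theta\,\bar g(x,\bar\nu)$ by \eqref{nu0} and \eqref{XXaeq2}. This yields $\varphi_{n+1}|_{\partial M}=n\sin^2\theta-(H+n\cos\theta)\sin\theta\,\bar g(x,\bar\nu)$, which is precisely $\sin\theta$ times the integrand of the identity \eqref{integral-b}. Hence $\int_{\partial M}\varphi_{n+1}\,ds=\sin\theta\cdot 0=0$. No genuine difficulty is expected in this last paragraph beyond keeping the boundary decompositions \eqref{mu0}--\eqref{nubar} and the relation $\bar N=-E_{n+1}$ on $\mathcal H$ straight.
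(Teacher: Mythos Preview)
Your proposal is correct and follows essentially the same approach as the paper. The paper's proof is terse---it simply cites Proposition~\ref{prop4.66} for \eqref{varphi1}, Proposition~\ref{prop-boundary} for \eqref{Hyp-bdy1}, and \eqref{Mink1} and \eqref{integral-b} for the two integral identities---whereas you spell out the intermediate step $JV_{n+1}=|h|^2 V_{n+1}+H\bar g(E_{n+1},\nu)$ (via $\bar\nabla^2 V_{n+1}=V_{n+1}\bar g$ and $\bar\nabla V_{n+1}=-E_{n+1}$) and make explicit the regrouping $\varphi_{n+1}=n(V_{n+1}-\cos\theta\,\bar g(E_{n+1},\nu))-(H+n\cos\theta)\bar g(X_{n+1},\nu)$, which is exactly what the structure of \eqref{boundary-111}--\eqref{boundary-222} is set up to exploit; these are the details the paper leaves to the reader, not a different route.
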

\begin{proof}
	\eqref{varphi1} follows from Proposition \ref{prop4.66} and \eqref{Hyp-bdy1}  from Proposition \ref{prop-boundary}.
	\eqref{bdy-zero1} is exactly \eqref{Mink1}.
Using \eqref{nu0} and \eqref{XXaeq2} on $\partial M$, we have
\begin{eqnarray}\label{X-inte}
\varphi_{n+1}&=&nV_{n+1}-\bar{g}(X_{n+1},\nu)H-n\cos \theta \bar{g}(x,\nu)\\
&=&n(1-\cos^{2}\theta)-\sin\theta \bar{g}(X_{n+1},\bar \nu )H-n\cos\theta\sin\theta \bar{g}(x,\bar \nu )\nonumber\\
&=&\sin\theta(n\sin\theta-\bar{g}(x,\bar \nu )H-n\cos\theta \bar{g}(x,\bar \nu)).\nonumber
\end{eqnarray}
Therefore, \eqref{bdy-zero2} follows from  \eqref{integral-b}.
\end{proof}

Now we are ready to prove the uniqueness result for stable capillary hypersurfaces.
\begin{theorem}\label{thmm4.1}
Let $x: M\to\hh^{n+1}$ be a stable capillary hypersurface with boundary supported on $\mathcal{H}$.  Then $x(M)$ is umbilical.
\end{theorem}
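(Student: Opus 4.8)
The plan is to use the admissible test function $\varphi_{n+1}$ from \eqref{test-function} in the stability inequality \eqref{stab-ineq'}, together with an auxiliary function $\Phi$ chosen to produce a vanishing integral that absorbs the problematic terms, ultimately forcing the integrand of a sign-definite expression to vanish and hence the umbilicity $h=\frac Hn g$. First I would substitute $\varphi_{n+1}$ into \eqref{stab-ineq'}. Since by \eqref{Hyp-bdy1} we have $\nabla_\mu\varphi_{n+1}=q\varphi_{n+1}$, the boundary term $\int_{\partial M}\varphi_{n+1}(\nabla_\mu\varphi_{n+1}-q\varphi_{n+1})\,ds$ vanishes identically. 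Using \eqref{varphi1}, namely $J\varphi_{n+1}=(n|h|^2-H^2)V_{n+1}$, the stability inequality reduces to
\begin{equation*}
0\le E''(0)=-\int_M\varphi_{n+1}(n|h|^2-H^2)V_{n+1}\,dA.
\end{equation*}
Expanding $\varphi_{n+1}=nV_{n+1}-\bar g(X_{n+1},\nu)H-n\cos\theta\,\bar g(x,\nu)$ and recalling $X_{n+1}=x-E_{n+1}$, this yields an integral inequality of the shape \eqref{xeq1}, with the main ``good'' term $\int_M nV_{n+1}^2(n|h|^2-H^2)\,dA$ and ``bad'' terms involving $H\bar g(x,\nu)$ and $H\bar g(E_{n+1},\nu)$.

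The heart of the argument, as the introduction signals, is to control these remaining terms. The key is the observation that the troublesome term $-H\int_M\bar g(x,\nu)(n|h|^2-H^2)V_{n+1}\,dA$ can be rewritten using \eqref{varphi1} as $-H\int_M\bar g(x,\nu)\,J\varphi_{n+1}\,dA$, and then transformed into a pure boundary integral by exploiting that $x$ is a Killing field (so $J\bar g(x,\nu)=0$ by \eqref{xmu}) together with the boundary identity \eqref{Hyp-dddd22}. Concretely, I would integrate by parts twice:
\begin{equation*}
\int_M\bar g(x,\nu)\,J\varphi_{n+1}\,dA=\int_M\varphi_{n+1}\,J\bar g(x,\nu)\,dA+\int_{\partial M}\bigl(\bar g(x,\nu)\nabla_\mu\varphi_{n+1}-\varphi_{n+1}\nabla_\mu\bar g(x,\nu)\bigr)\,ds,
\end{equation*}
and since $J\bar g(x,\nu)=0$ the bulk term drops, leaving only boundary data governed by \eqref{Hyp-bdy1} and \eqref{Hyp-dddd22}. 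Next I would introduce the auxiliary function $\Phi$ (the analogue of \eqref{auxi}) and add the identically vanishing quantity $0=\int_M\Delta(\tfrac12\Phi^2)\,dA-\int_{\partial M}\Phi\nabla_\mu\Phi\,ds$. The function $\Phi$ must be engineered so that $\int_M\Delta(\tfrac12\Phi^2)\,dA=\int_M(|\nabla\Phi|^2+\Phi\Delta\Phi)\,dA$ exactly dominates the good term $\int_M nV_{n+1}^2(n|h|^2-H^2)\,dA$, while the boundary piece $\int_{\partial M}\Phi\nabla_\mu\Phi\,ds$ precisely cancels the boundary integral produced above.

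After this cancellation the combined expression should take the form $\int_M(\text{nonnegative weight})(n|h|^2-H^2)\,dA\le 0$, where the pointwise inequality $n|h|^2\ge H^2$ (Cauchy–Schwarz on the principal curvatures) forces the integrand to vanish everywhere, giving $n|h|^2=H^2$, i.e. $h=\frac Hn g$, so $M$ is umbilical. The main obstacle I anticipate is the precise algebraic construction of $\Phi$: one must choose it so that the Jacobi-operator manipulations make the bulk terms align with exactly the right coefficients and the boundary terms cancel exactly. This requires careful bookkeeping of the terms $\bar g(x,\nu)$, $\bar g(E_{n+1},\nu)$, and $V_{n+1}$ under the operators $J$ and $\nabla_\mu$, using \eqref{Ennu}, \eqref{Xmu1}, \eqref{boundary-111}, \eqref{boundary-222}, and \eqref{Hyp-dddd22}, and it is here that the distinction between the roles of the Killing field $x$ and the conformal field $X_{n+1}$—emphasized in the introduction—becomes decisive.
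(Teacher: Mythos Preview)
Your plan follows the paper's proof essentially line for line: use $\varphi_{n+1}$ as test function, kill its boundary term via \eqref{Hyp-bdy1}, split off the $\bar g(x,\nu)$ piece and convert it to a boundary integral via Green's formula and \eqref{xmu}, then add $0=\int_M\Delta(\tfrac12\Phi^2)-\int_{\partial M}\Phi\nabla_\mu\Phi$ with $\Phi=-HV_{n+1}-n\bar g(E_{n+1},\nu)$. Two points in your outline are understated and would need real work, however.

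First, the boundary terms do \emph{not} cancel pointwise. After combining the Green's-formula boundary term (with coefficient $H+n\cos\theta$, not just $H$) and $\int_{\partial M}\Phi\nabla_\mu\Phi$, the resulting boundary integral is
\[
(H+n\cos\theta)^2\int_{\partial M}\bigl(-\sin\theta+\cos\theta\,\bar g(x,\bar\nu)+h(\mu,\mu)\,\bar g(x,\bar\nu)\bigr)\,ds,
\]
and to see this vanishes you must substitute \eqref{hmumu} for $h(\mu,\mu)$ and then invoke the two Minkowski-type identities \eqref{boundaryM} and \eqref{integral-b} on $\partial M$. This is not mere bookkeeping; it is precisely where \eqref{boundaryM} and \eqref{integral-b} enter the argument.

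Second, the final inequality is not of the form $\int_M(\text{positive weight})(n|h|^2-H^2)\le0$. What you actually obtain is
\[
\int_M n\,\bar g(E_{n+1}^T,E_{n+1}^T)(n|h|^2-H^2)\,dA+\int_M|\nabla\Phi|^2\,dA\le0,
\]
and the weight $\bar g(E_{n+1}^T,E_{n+1}^T)$ can vanish (wherever $\nu\parallel E_{n+1}$). The conclusion therefore needs both summands: from $|\nabla\Phi|\equiv0$ you get $\Delta\Phi=(n|h|^2-H^2)\bar g(E_{n+1},\nu)=0$ via \eqref{auxi2}, and since $\bar g(E_{n+1}^T,E_{n+1}^T)+\bar g(E_{n+1},\nu)^2=V_{n+1}^2>0$ the two vanishing conditions together force $n|h|^2\equiv H^2$.
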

\begin{proof} 
From \eqref{bdy-zero1}, we know that $\varphi_{n+1}$ is an admissible test function in \eqref{stab-ineq'}.
Therefore, by \eqref{Hyp-bdy1}, we have
\begin{eqnarray}\label{xeq1111}
0 &\le &-\int_M\varphi_{n+1}J \varphi_{n+1}\,+\int_{\p M} \varphi_{n+1}(\n_\mu \varphi_{n+1}-q \varphi_{n+1})\,
\\&= &-\int_M (nV_{n+1}-\bar{g}(x-E_{n+1},\nu)H-n\cos\theta \bar{g}(x,\nu))J\varphi_{n+1}\,  \nonumber\\
&=&-\int_M (nV_{n+1}+\bar{g}(E_{n+1},\nu)H)J\varphi_{n+1}\, +(H+n\cos\theta)\int_{M}\bar{g}(x,\nu)J\varphi_{n+1}.\nonumber
\end{eqnarray}
We compute the last term of \eqref{xeq1111} by Green's  formula. By \eqref{xmu} and \eqref{Hyp-bdy1}, we have
\begin{eqnarray}\label{xeq11120}
\int_{M}\bar{g}(x,\nu)J\varphi_{n+1}
&=&\int_{M}J\bar{g}(x,\nu)\varphi_{n+1}+\int_{\partial M}\bar{g}(x,\nu)\nabla_{\mu}\varphi_{n+1}-\varphi_{n+1}\nabla_{\mu}\bar{g}(x,\nu)\\
&=&\int_{\partial M}(q\cdot\bar{g}(x,\nu)-\nabla_{\mu}\bar{g}(x,\nu))\varphi_{n+1}.\nonumber
\end{eqnarray}
By \eqref{Hyp-dddd22}, \eqref{nu0} and \eqref{Nbar}, we see along $\partial M$
\begin{eqnarray}\label{xeq1113}
q\cdot\bar{g}(x,\nu)-\nabla_{\mu}\bar{g}(x,\nu)&=&\left(\csc\theta+\cot\theta h(\mu,\mu)\right)\bar{g}(x,\nu)-(\bar{g}(x,\bar \nu )+h(\mu, \mu)\bar{g}(x,\mu))\\
&=&-\cot\theta \bar{g}(x,\bar{N})-{\csc\theta} \bar{g}(x,\bar{N})h(\mu,\mu)\nonumber\\
&=&\cot\theta+{\csc\theta}  h(\mu,\mu),\nonumber
\end{eqnarray}
in the last equality we have used $\bar{g}(x,\bar{N})=-1$ on $\partial M$.
Instituting \eqref{xeq11120}-\eqref{xeq1113} into \eqref{xeq1111}, we get
\begin{equation}\label{stability-2}
  \int_M (nV_{n+1}+\bar{g}(E_{n+1},\nu)H)J\varphi_{n+1}\, -(H+n\cos\theta)\int_{\partial M}(\cot\theta+{\csc\theta}  h(\mu,\mu))\varphi_{n+1}\leq0.
\end{equation}

In order to get the information from  \eqref{stability-2}, we introduce an auxiliary function
\begin{equation}\label{auxi}
  \Phi:=-H V_{n+1}-n\bar{g}(E_{n+1},\nu).
\end{equation}
By \eqref{va2} and \eqref{Ennu}, we obtain
\begin{equation}\label{auxi2}
  \Delta\Phi=(n|h|^{2}-H^{2})\bar{g}(E_{n+1},\nu).
\end{equation}
Note from \eqref{nu0} that  \begin{eqnarray}\label{Phi-boun1}
\Phi|_{\p M}=-H-n\cos\theta
\end{eqnarray}
Also, similar as in the derivation of  \eqref{X-boun2},  we can calculate
\begin{eqnarray}\label{Phi-boun2}
&&  \nabla_{\mu}\Phi
=-\sin\theta(H-nh(\mu,\mu)).
\end{eqnarray}
Inserting \eqref{auxi}- \eqref{Phi-boun2} into the following identity \begin{eqnarray}\label{xeq2}
&&\int_{M}\Phi\Delta\Phi+|\nabla \Phi|^{2} =\int_{M}\frac{1}{2}\Delta\Phi^{2}=\int_{\partial M}\Phi\nabla_{\mu}\Phi,
\end{eqnarray}
we get an integral identity
\begin{eqnarray}\label{xeq211}
&&\int_{M}(-HV_{n+1}-n\bar{g}(E_{n+1},\nu))(n|h|^{2}-H^{2})\bar{g}(E_{n+1},\nu)+|\nabla \Phi|^{2} \\
&=&(H+n\cos\theta)\sin\theta\int_{\partial M}(H-nh(\mu,\mu)).\nonumber
\end{eqnarray}
Adding \eqref{xeq211} to \eqref{stability-2} and applying \eqref{varphi1} we have
\begin{eqnarray}\label{xeq2112}
0&\ge &\int_{M}n\bar{g}(E_{n+1}^{T},E_{n+1}^{T})(n|h|^{2}-H^{2})+|\nabla \Phi|^{2} \nonumber\\
&&+(H+n\cos\theta)\int_{\partial M}\left[\sin\theta(- H+n  h(\mu,\mu))-(\cot\theta+{\csc\theta}  h(\mu,\mu))\right]\varphi_{n+1}.
\end{eqnarray}

Now we have a key observation  that the boundary term in the right hand side of \eqref{xeq2112} vanishes.
In fact, by \eqref{X-inte} we can simplify
\begin{eqnarray}\label{xeq2133}
&& \int_{\partial M}\left[\sin\theta(- H+n  h(\mu,\mu))-(\cot\theta+{\csc\theta}  h(\mu,\mu))\right]\varphi_{n+1}\\
&=&(H+n\cos\theta)\int_{\partial M}(-\sin\theta+\cos\theta \bar{g}(x,\bar \nu)+h(\mu,\mu)\bar{g}(x,\bar \nu )).\nonumber
\end{eqnarray}
Utilizing \eqref{hmumu}, \eqref{boundaryM} and \eqref{integral-b}, we obtain
\begin{eqnarray}\label{xeq2134}
&&\int_{\partial M}(-\sin\theta+\cos\theta \bar{g}(x,\bar \nu)+h(\mu,\mu)\bar{g}(x,\bar \nu ))\\
&=&\int_{\partial M}(-\sin\theta+\cos\theta \bar{g}(x,\bar \nu )+(H-\sin\theta \hat{H}+(n-1)\cos\theta)\bar{g}(x,\bar \nu ))\nonumber\\
&=&-\int_{\partial M} \sin\theta+\int_{\partial M}(H+n\cos\theta)\bar{g}(x,\bar \nu )-\int_{\partial M}\sin\theta\bar{g}(x,\bar \nu )\hat{H}\nonumber\\
&=&-\sin\theta\,|\partial M| +n\sin\theta\,|\partial M|-(n-1)\sin\theta\,|\partial M|\nonumber\\
&=&0.\nonumber
\end{eqnarray}
Therefore, inserting \eqref{xeq2133}-\eqref{xeq2134} into \eqref{xeq2112}, we get
\begin{equation}\label{umbilical}
\int_{M}n\bar{g}(E_{n+1}^{T}, E_{n+1}^{T})(n|h|^{2}-H^{2})+|\nabla\Phi|^{2}dA\leq0.
\end{equation}It follows that
\begin{eqnarray}\label{conclusion1}
\bar{g}(E_{n+1}^{T}, E_{n+1}^{T})(n|h|^{2}-H^{2})=0,
\end{eqnarray}
and $\Phi$ is a constant which implies $\Delta \Phi=0$. It follows from \eqref{auxi2} that \begin{eqnarray}\label{conclusion2}
(n|h|^{2}-H^{2})\bar{g}(E_{n+1},\nu)=0,
\end{eqnarray}
we conclude from \eqref{conclusion1} and \eqref{conclusion2} that $n|h|^{2}-H^{2}\equiv0$, i.e.,  $M$ is umbilical.
\end{proof}

\

\section{Uniqueness for type-II stable hypersurfaces supported on a horosphere}\label{sec5}

In this section we sketch the rigidity result for type-II stable hypersurfaces supported on $\mathcal{H}$ in $\hh^{n+1}$.
First we recall some basic concepts and refer to \cite{GX1} for details.
\begin{definition}
	An immersion $x: M\rightarrow \hh^{n+1}$ supported on  $\mathcal{H}$ is said to be {\it type-II stationary} if it is a stationary point of the area functional  for any wetting-area-preserving admissible variation of $x$. A type-II stationary hypersurface is said to be {\it type-II stable} if the second variation of
the area functional  is nonnegative among any wetting-area-preserving admissible variations.\end{definition}

From the  first variation formulas \eqref{first-var} and \eqref{wet-first-var}, we know that $x$ is type-II stationary if and only if $x$ is a minimal immersion, namely $H=0$, and $x(M)$ intersects $\mathcal{H}$ at a constant angle $\theta\in (0, \pi)$. When $x$ is a type-II stable hypersurface, then \begin{eqnarray}\label{stable-second}
&&A''(0)=-\int_M\vp J\vp\, dA+\int_{\p M} \vp(\n_\mu \vp-q \vp)\,ds\ge 0, \qquad \varphi\in C^\infty(M) \hbox{ with }\int_{\p M}\vp ds=0.
\end{eqnarray}
Here $J$ and $q$ are also given in \eqref{jacobi} and \eqref{q'}. See \cite[Proposition 2.2]{GX1}.
It can be shown that
for any $\vp\in C^\infty(M)$ satisfying $\int_{\p M}  \vp \,ds=0$, there exists an admissible wetting-area-preserving variation of $x$ with variational vector field having $\vp\nu$ as its normal part, see \cite[Proposition 2.1]{GX1}.

From the proof of Proposition \ref{stable-necess}, we see that any totally geodesic type-II hypersurface supported on $\mathcal{H}$ is also type-II stable.
Now we prove the uniqueness result for type-II stable hypersurfaces with boundary supported on $\mathcal{H}$  in $\hh^{n+1}$ as follows
\begin{theorem}
Let $x: M\to\hh^{n+1}$ be a type-II stable hypersurface with boundary supported on $\mathcal{H}$. Then $x(M)$ is  totally geodesic.
\end{theorem}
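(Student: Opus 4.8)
The plan is to run the proof of Theorem~\ref{thmm4.1} almost verbatim, specialising everything to the minimal case $H=0$ that type-II stationarity forces. First I would record that the test function \eqref{test-function} collapses to
\begin{equation*}
\varphi_{n+1}=nV_{n+1}-n\cos\theta\,\bar g(x,\nu),
\end{equation*}
and that \eqref{varphi1} becomes $J\varphi_{n+1}=n|h|^2V_{n+1}$. The decisive point is that admissibility is now governed by a different constraint than in the capillary setting: the type-II stability inequality \eqref{stable-second} demands $\int_{\p M}\varphi\,ds=0$ instead of $\int_M\varphi\,dA=0$. This is exactly \eqref{bdy-zero2}, so $\varphi_{n+1}$ remains an admissible test function. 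Inserting it into \eqref{stable-second} and using the Robin-type identity \eqref{Hyp-bdy1}, $\n_\mu\varphi_{n+1}=q\varphi_{n+1}$, annihilates the boundary integral and leaves $-\int_M\varphi_{n+1}J\varphi_{n+1}\,dA\ge0$.

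Next I would repeat the Green's-formula manipulation \eqref{xeq11120}--\eqref{xeq1113} with $H=0$; this reproduces the $H=0$ analogue of \eqref{stability-2},
\begin{equation*}
\int_M n^2|h|^2V_{n+1}^2\,dA-n\cos\theta\int_{\p M}\big(\cot\theta+\csc\theta\,h(\mu,\mu)\big)\varphi_{n+1}\,ds\le0.
\end{equation*}
I would then introduce the same auxiliary function \eqref{auxi}, which now reads $\Phi=-n\bar g(E_{n+1},\nu)$, so that \eqref{auxi2} gives $\Delta\Phi=n|h|^2\bar g(E_{n+1},\nu)$, while \eqref{Phi-boun1}--\eqref{Phi-boun2} give the boundary data $\Phi|_{\p M}=-n\cos\theta$ and $\n_\mu\Phi=n\sin\theta\,h(\mu,\mu)$. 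Feeding these into the vanishing identity \eqref{xeq2} yields the $H=0$ form of \eqref{xeq211}; adding it to the displayed inequality and applying \eqref{varphi1} once more produces the analogue of \eqref{xeq2112} whose bulk integrand is $n^2|h|^2\,\bar g(E_{n+1}^{T},E_{n+1}^{T})+|\n\Phi|^2$.

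The technical heart, exactly as in Theorem~\ref{thmm4.1}, is to show the surviving boundary term vanishes; I expect this to be the only genuinely delicate (though still routine) computation. Following \eqref{xeq2133}--\eqref{xeq2134} one substitutes \eqref{hmumu} (with $H=0$, so $h(\mu,\mu)=-\sin\theta\,\hat H+(n-1)\cos\theta$), the classical Minkowski identity \eqref{boundaryM}, and the $H=0$ specialisation of \eqref{integral-b}; the boundary integrals then collapse to $\big(-1+n-(n-1)\big)\sin\theta\,|\p M|=0$. With the boundary term gone I am left with
\begin{equation*}
\int_M n^2|h|^2\,\bar g(E_{n+1}^{T},E_{n+1}^{T})+|\n\Phi|^2\,dA\le0.
\end{equation*}

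Since both summands are nonnegative, this forces $|h|^2\,\bar g(E_{n+1}^{T},E_{n+1}^{T})\equiv0$ and $\n\Phi\equiv0$; the latter makes $\Phi$ constant, so $\Delta\Phi=0$ and hence $|h|^2\,\bar g(E_{n+1},\nu)\equiv0$ by \eqref{auxi2}. Because $\bar g(E_{n+1}^{T},E_{n+1}^{T})+\bar g(E_{n+1},\nu)^2=\bar g(E_{n+1},E_{n+1})=V_{n+1}^2>0$ at every point, at least one of these two factors is strictly positive there, so $|h|^2=0$ pointwise. Thus $h\equiv0$ on $M$, i.e.\ $M$ is totally geodesic, which is the desired conclusion — the $H=0$ counterpart of the umbilicity output \eqref{conclusion1}--\eqref{conclusion2}.
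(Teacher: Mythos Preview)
Your proposal is correct and follows essentially the same approach as the paper. The paper's own proof is in fact just a two-line sketch: it observes that \eqref{bdy-zero2} makes $\varphi_{n+1}$ admissible for the type-II stability inequality \eqref{stable-second}, notes that this inequality has the same form as in the capillary case, and then simply invokes the proof of Theorem~\ref{thmm4.1} verbatim to obtain umbilicity, concluding total geodesy from $H=0$. You have carried out exactly this plan, with the cosmetic (and harmless) choice of setting $H=0$ at the outset rather than at the end.
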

\begin{proof} We use $\vp_{n+1}$ as defined in \eqref{test-function}.
From \eqref{bdy-zero2}, we know that $\varphi_{n+1}$ is an admissible test function in \eqref{stable-second}. Since stability inequality \eqref{stable-second} is the same as the capillary hypersurface case,
we see that the proof in this case follows the same line with that of Theorem \ref{thmm4.1} to get that $x(M)$ is umbilical. Since $M$ is minimal, we get $x(M)$ is totally geodesic.

\end{proof}

The paper has been submitted and the results have been reported by Chao Xia in working seminars at Beihang University on September 17th, 2021 and at Hubei University on October 12th, 2021.
\



\begin{thebibliography}{99}
	
	
	\bibitem{AlS} A. Alarc\'on and R. Souam, {\em Capillary surfaces inside polyhedral regions,}
Calc. Var. PDE 54 (2015), 2149--2166.	


	
\bibitem{AS}A. Ainouz and R. Souam, {\em Stable capillary hypersurfaces in a half-space or a slab,} Indiana Univ. Math. J. 65 (2016),  813--831.

 \bibitem{A} F.J. Almgren, {\em  Spherical symmetrization. In: Proceedings of International Workshop on Integral Functions in the Calculus of Variations,} Trieste, 1985, Red. Circ. Mat. Palermo 2 Supple. pp. 11--25 (1987)

  \bibitem{Ba} E. Barbosa,   {\em On CMC free-boundary stable hypersurfaces in a Euclidean ball,}
Math. Ann. 372 (2018) 179--187.

\bibitem{BdC} J. L. Barbosa and M. do Carmo, {\em Stability of hypersurfaces with constant mean curvature,} Math. Z., 185 (1984), pp. 339-353.


\bibitem{BCE} J. L. Barbosa, M. do Carmo and J. Eschenburg, {\em Stability of hypersurfaces of constant mean curvature in Riemannian manifolds,} Math.Z. 197(1988),123-138.

\bibitem{BS} J. Bokowsky and E. Sperner, {\em Zerlegung konvexer K\"orper durch minimale Trennfl\"achen,  } J. Reine
Angew. Math. 311/312 (1979), 80-100.


\bibitem{Fluids} J.B. Bostwick,  P. H., Steen,  {\em Stability of constrained capillary surfaces,} Annual review of fluid mechanics. Vol. 47, 539--568, Annu. Rev. Fluid Mech., 47, Annual Reviews, Palo Alto, CA, 2015.


\bibitem{BM} Y. D. Burago and V. G. Maz'ya, {\em Some questions of potential theory and function theory for domains with non-regular boundaries,} Zap. Nauchn. Semin. Leningr. Otd. Mat. Inst. Steklova 3 (1967), 152 pages (in Russian). English translation: Semin. Math. Steklov Math. Inst. Leningr. 3 (1969), 68 pages. Consultants Bureau, New York.





\bibitem{CK}J. Choe and M. Koiso, {\it Stable capillary  hypersurfaces in a wedge,} Pacific J. Math., 280 (2015) 1-15.
\bibitem{Hildebrandt} U. Dierkes, S. Hildebrandt, A.K\"uster and O. Wohlrab, {\em Minimal surfaces,} Springer, Berlin, Heidelberg (1992).

\bibitem{Finn} R. Finn, {\em  Equilibrium Capillary Surfaces,} Springer, New York (1986)

\bibitem{FS1} A. Fraser and R.  Schoen,  {\em The first Steklov eigenvalue, conformal geometry, and minimal surfaces,}
Adv. Math. 226 (2011),  4011--4030.

\bibitem{FS2} A. Fraser and R.  Schoen,
{\em Sharp eigenvalue bounds and minimal surfaces in the ball,}  Invent. Math., 203 (2016), no. 3, 823--890.

\bibitem{FS3} A. Fraser and R. Schoen, {\em Uniqueness Theorems for Free Boundary Minimal Disks in Space Forms,}
IMRN, 2015 (2015) 8268--8274.

\bibitem{GL}P. Guan and J. Li, \emph{A mean curvature type flow in space forms}, IMRN, {2015} (2015),  4716--4740.

\bibitem{GLW}
P. Guan, J. Li, and M.-T. Wang, \emph{A volume preserving flow and
	the isoperimetric problem in warped product spaces}, Trans. Am. Math. Soc.
\textbf{372} (2019), 2777--2798.


 \bibitem{GX1} J. Guo and C. Xia, {\it Stability for a second type partitioning problem,}  J.  Geom.  Anal.,  31 (2021) 2890-2923.
\bibitem{GX2} J. Guo and C. Xia, {\it  A partially overdetermined problem in domains with partial umbilical boundary in space forms,} 2020, arXiv preprint arXiv:2001.10397v2.
\bibitem{LiXiong} H. Li  and C. Xiong, {\it Stability of capillary hypersurfaces in a Euclidean ball,} Pacific J.  Math., 297 (2018),  131-146.

\bibitem{LiXiong2} H. Li  and C. Xiong,  {\it Stability of capillary hypersurfaces with planar boundaries,} J. Geom. Anal. 27 (2017), 79--94.








\bibitem{RAF2} R. L\'{o}pez,{\it Capillary surfaces with free boundary in a wedge,} Adv. Math., 262 (2014), 476-483.



\bibitem{M1} P. Marinov, {\it Stability analysis of capillary surfaces with planar or spherical boundary in the absence of gravity,}  PhD thesis, University of Toledo
\bibitem{M2} P. Marinov, {\it Stability of capillary surfaces with planar boundary in the absence of gravity,} Pac. J. Math. 255(1), 177--190 (2012)


\bibitem{Ni} J. Nitsche, {\it  Stationary partitioning of convex bodies,}  Arch. Rat. Mech. Anal. 89 (1985), 1--19.


\bibitem{Nu} I. Nunes,  {\it On stable constant mean curvature surfaces
with free boundary,}  287 (2017),  473--479.

\bibitem{pyo} J. Pyo, {\it Rigidity theorems of hypersurfaces with free boundary in a wedge in a space form,} Pacific J. Math., 299(2019),  489-510.
\bibitem{Rosales} C. Rosales,  {\it  Stable constant mean curvature hypersurfaces inside convex domains,} Differential geometry and its applications, 165--177, Matfyzpress, Prague, 2005.



\bibitem{RS}A. Ros  and R. Souam,  {\it On stability of capillary surfaces in a ball,}  Pacific J. Math. 178 (1997),  345--361.

\bibitem{RV} A. Ros  and E. Vergasta, {\it Stability for hypersurfaces of constant mean curvature with free boundary, }  Geometriae Dedicata 56 (1995), 19--33.





\bibitem{SWX} J. Scheuer, G. Wang and C. Xia, {\it Alexandrov-Fenchel inequalities for convex hypersurfaces with free boundary in a ball,}
\textit{Arxiv1811.05776}, to appear in JDG.


\bibitem{Souam2} R. Souam, {\it Stable constant mean curvature surfaces with free boundary in slabs,} J. Geom. Anal., 31 (2021), 282-297.






\bibitem{V} T. I. Vogel, {\it Stability of a liquid drop trapped between two parallel planes,} SIAM J. Appl. Math. 47(3), 516--525
\bibitem{WX} G. Wang and C. Xia, {\it Uniqueness of stable capillary hypersurfaces in a ball,} Math. Ann., 374 (2019), 1845--1882.

\bibitem{WX2} G. Wang and C. Xia,
{\it  Guan-Li type mean curvature flow for free boundary hypersurfaces in a ball,} (2019) {\it arXiv:1910.07253} to appear in CAG.
\bibitem{WXsurvey} G. Wang and C. Xia, {\it Rigidity of free boundary CMC hypersurfaces in a ball,} Surveys in Geometric Analysis, 2018, 138-153.
\bibitem{WWe} G. Wang and  L.  Weng, {\it  A mean curvature type flow with capillary boundary in a unit ball,}  Calc. Var. PDE 59 (2020), no. 5, Paper No. 149, 26 pp.


\end{thebibliography}
\end{document}